\newcommand{\blind}{1}
\definecolor{DarkGreen}{rgb}{0.2,0.6,0.2}
\def\black#1{\textcolor{black}{#1}}
\def\argmin{\mathop{\rm arg\,min}}
\newtheorem{lemma}{Lemma}[section]
\newtheorem{theorem}{Theorem}[section]
\newtheorem{proposition}{Proposition}[section]
\theoremstyle{definition}
\newtheorem{example}{Example}[section]
\theoremstyle{remark}
\newtheorem{remark}{Remark}[section]
\definecolor{darkblue}{rgb}{0.1,0.1,0.9}
\definecolor{darkred}{rgb}{0.9,0.1,0.1}
\begin{document}

\def\spacingset#1{\renewcommand{\baselinestretch}%
{#1}\small\normalsize} \spacingset{1}


\title{\bf The Multi-Armed Bandit Problem Under the Mean-Variance Setting}
  
\author{
\normalsize Hongda Hu\thanks{Department of Statistics and Actuarial Science, University of Waterloo}
\and 
\normalsize Arthur Charpentier\thanks{Universit\'e du Qu\'ebec \`a Montr\'eal}
\and 
\normalsize Mario Ghossoub\thanks{Department of Statistics and Actuarial Science, University of Waterloo}
\and 
\normalsize Alexander Schied\thanks{Department of Statistics and Actuarial Science, University of Waterloo, {\tt aschied@uwaterloo.ca}\hfill\break 
  We are grateful to three anonymous reviewers for comments that have greatly helped us to improve this paper. H.H.~and A.S.~gratefully acknowledge support from the Natural Sciences and Engineering Research Council of Canada through grant RGPIN-2017-04054. H.H.~and M.G.~gratefully acknowledges support from the Natural Sciences and Engineering Research Council of Canada through grant RGPIN-2018-03961. A.C.~acknowledges the financial support of the AXA Research Fund through the joint research initiative {\em use and value of unusual data in actuarial science}, as well as from the Natural Sciences and Engineering Research Council of Canada through grant RGPIN-2019-07077. 
  }
  }
  
  \date{\normalsize 
  First version:   December 18, 2022\\
  This version: May 2, 2024 }
     \maketitle

\if0\blind
{
  \bigskip
  \bigskip
  \bigskip
  \begin{center}
    {\LARGE\bf Title}
\end{center}
  \medskip
} \fi


\bigskip
\begin{abstract}
The classical multi-armed bandit problem involves a learner and a collection of arms with unknown reward distributions. At each round, the learner selects an arm and receives new information. The learner faces a tradeoff between exploiting the current information and exploring all arms. The objective is to maximize the expected cumulative reward over all rounds. Such an objective does not involve a risk-reward tradeoff, which is fundamental in many areas of application. In this paper, we build upon \citet{sani2012risk}’s extension of the classical problem to a mean-variance setting. We relax their assumptions of independent arms and bounded rewards, and we consider sub-Gaussian arms. We introduce the Risk-Aware Lower Confidence Bound algorithm to solve the problem, and study some of its properties. We perform numerical simulations to demonstrate that, in both independent and dependent scenarios, our approach outperforms the algorithm suggested by \citet{sani2012risk}

\end{abstract}

{\noindent {\it Keywords:} multiarmed bandits, mean-variance regret analysis, sub-Gaussian distribution, online optimization}



\section{Introduction} \label{section1}

The multi-armed bandit (MAB) problem is a type of online learning and sequential decision-making problem. A classical MAB problem involves $K$ independent arms, each with its own independent reward distribution, and a learner. In the problem, each arm generates a random reward from the underlying probability distribution, which is unknown in advance. At each round, the learner selects one arm among $K$ arms and receives a new observation from that arm. The learner often faces a tradeoff between exploiting the current information and exploring all arms. Given a finite number of $n$ rounds, the design objective aims to maximize the cumulative reward over $n$ rounds.\par

The MAB framework has been applied to a wide range of real-world problems, including clinical trials (\citet{durand2018contextual}), recommendation systems (\citet{mary2015bandits}), telecommunication (\citet{boldrini2018mumab}) and finance (\citet{shen2015portfolio}). For example, in finance (\citet{shen2015portfolio}), the actions correspond to the proposed portfolio composition, and the reward represents the performance of the proposed portfolio. The exploration–exploitation dilemma in the above example is whether to try a new portfolio composition or keep using the current portfolio with the best historical performance. \par

As is standard in the literature, we measure the welfare implications of a given selection policy by the notion of regret. The latter is defined as the difference between the expected cumulative rewards under the true distribution and the empirical distribution.\par

\subsection{Related Work}

The MAB problem has been explored by \citet{thompson1933likelihood} and \citet{robbins1952some} as a useful tool for constructing online sequential decision algorithms. \citet{thompson1933likelihood} proposed a Bayes-optimal approach that directly maximizes expected cumulative rewards with respect to a given prior distribution. \citet{robbins1952some} first formalized the classic MAB problem, in which each arm follows an unknown distribution over [0,1], and rewards are independent draws from the distribution corresponding to the chosen arm. Since then, a number of methods have been developed to solve the MAB problem. Most of the possible ways to solve the MAB problem can be defined into three categories:

\begin{itemize}

\item \textit{$\varepsilon$-greedy algorithm:} is a straightforward method for balancing exploration and exploitation by employing a greedy policy to take action with a probability of $1-\varepsilon$, and a random action with a small probability of $\varepsilon$.

\item \textit{Upper confidence bounds (UCB) algorithm:} is often phrased as ``optimism in the face of uncertainty". The algorithm uses the observed data so far to assign a value (i.e., upper confidence bound index) to each arm, where the arm with the higher index value will be selected with a higher probability in the next round. Strong theoretical guarantees on the rate of regret can be attained by implementing the UCB algorithm. Early work on using UCB to solve MAB problems was done by \citet{lai1985asymptotically}, \citet{agrawal1995sample} and \citet{auer2002finite}. The technique of upper confidence bounds (UCB) for the asymptotic analysis of regret was introduced by \citet{lai1985asymptotically}, who demonstrated that the minimum regret has a logarithmic order in $n$. As a simpler formulation, \citet{agrawal1995sample} discusses the MAB problem under a finite-time setting. \citet{auer2002finite} conduct a finite-time analysis of the classic MAB with bounded rewards using the UCB algorithm.

 \item \textit{Thompson sampling:} is the first algorithm for the bandit problem proposed by \citet{thompson1933likelihood}. Thompson sampling has a simple idea. By implementing Thompson sampling, the learner selects a prior over a set of possible bandit environments at the start. In each round, the learner makes an action chosen randomly from the posterior. Thompson Sampling has been empirically proved to be effective by \citet{chapelle2011empirical}. In comparison to the UCB algorithm, Thompson Sampling does not have strong theoretical guarantees on regret. \black{\citet{zhu2020thompson}, \cite{baudry2021optimal}, and \citet{chang2022unifying} conducted theoretical analysis and established consistent guarantees on regret.}
\end{itemize}

In the classic MAB formulation, the learner maximizes the expected cumulative reward within a finite time frame. Nevertheless, in real practice, maximizing the expected reward may not be the ultimate goal. For instance, in portfolio selection, the portfolio that performs best during a period of the economic boom may lead to a substantial loss when the economy is struggling. In clinical trials, the treatment that works best on average may result in side effects for some patients. In many cases, a learner's primary goal may be to effectively balance risk and return. There is no universally accepted definition of risk. Among various risk modelling paradigms, the mean-variance paradigm (\citet{markowitz1968portfolio}) and the expected utility theory (\citet{morgenstern1953theory}) are the two fundamental risk modelling paradigms. 

The work of \citet{sani2012risk}, in which the criteria of mean-variance of data was first introduced, is the most important reference to our study. \citet{sani2012risk} study the problem where each arm's distribution is bounded, and the learner’s objective is to minimize the mean-variance of rewards collected over a finite time. Their proposed Mean-Variance Lower Confidence Bound (MVLCB) algorithm achieves a learning regret of $\mathcal{O}((\log(n))^2/n)$. As an improvement, \citet{vakili2016risk} analyze the problem under the mean-variance setting, but with the assumption that the square of the underlying $X$ is sub-Gaussian. \citet{vakili2016risk}'s algorithm achieves a learning regret of $\mathcal{O}((\log(n))/n)$. Furthermore, \citet{vakili2015mean} extend the previous setup by taking into account the value-at-risk ($\textup{VaR}$) of total rewards over a finite time horizon and ending with a learning regret of $\mathcal{O}((f(n)\log(n))/n)$, where $f(n)$ is a positive increasing diverging sequence. \citet{zhu2020thompson} examine the possibilities of Thompson sampling methods for solving mean-variance MAB and provide a detailed regret analysis for Gaussian and Bernoulli bandits with comparable learning regret $\mathcal{O}((\log(n))/n)$. In addition, they run a large number of simulations to compare their algorithms against existing UCB-based methods for different risk tolerances. As another way of measuring the risk, conditional-value-at-risk ($\textup{CVaR}$) has been studied in solving risk-averse MAB by \citet{galichet2013exploration}, \citet{kagrecha2019distribution} and \citet{prashanth2020concentration}. \citet{galichet2013exploration} show that their proposed algorithm's learning regret under the $\textup{CVaR}$ scheme is $\mathcal{O}((\log(n))/n)$. As a follow-up, \citet{kagrecha2019distribution} conduct a theoretical study of $\textup{CVaR}$-based algorithm for MAB with unbounded rewards, and \citet{prashanth2020concentration} compare the performance of $\textup{CVaR}$-based algorithm under light-tailed and heavy-tailed distributions.\par

Both \citet{sani2012risk} and \citet{galichet2013exploration} study the risk-aware MAB problem by assuming independent bounded arms. Such an assumption limits the application of their results. In real-life problems, multiple arms may expose the same source of risk, and the distribution of each arm may not necessarily be bounded. As a pioneer, \citet{liu2020risk} create a risk-aware UCB algorithm for Gaussian distributions that has a learning regret of $\mathcal{O}((\log(n))/n)$. The Thompson sampling methods are then used by  \citet{gupta2021multi} to analyze the correlated bandit problems, where the underlying distributions are bounded. They categorize the arms as competitive or non-competitive, assuming that correlations between the arms arise from a latent random source. For the competitive group, their algorithm produces a learning regret of $\mathcal{O}((\log(n))/n)$, whereas for the non-competitive group, it can only produce $\mathcal{O}(1)$.\par

Bandit learning algorithms are now widely used in a variety of fields. However, the application of bandit learning to finance has received little attention from researchers due to the inherent difference between financial assets and classic bandits.
\begin{itemize}

    \item \textit{Independence:} the classic MAB problem assumes i.i.d.~reward distributions for each arm, which does not apply to financial asset returns. Assets are correlated in the financial markets, and the performance of one asset can reveal important information about the other assets. A lower regret bound can be obtained by exploiting the dependence; for example, \citet{gupta2021multi} pioneered the analysis of the correlated MAB problem.

    \item\textit{Single pull:} the classic MAB problem only allows one arm to be pulled at each round, while portfolio selection strategies often result in a basket of financial assets for investment.
    
    \item \textit{Risk-return balance:} the classic MAB problem is only concerned with maximizing mean reward, while good portfolio selection strategies focus on risk-adjusted return (i.e., Sharpe ratio, mean-variance).
    \item \textit{Historical data:} the classic MAB problem has no historical data, whereas sufficient historical data is available for publicly traded financial assets.
    
\end{itemize}
\citet{shen2015portfolio} use a principal component decomposition (PCA) to construct an orthogonal portfolio from correlated assets, and they derive the optimal portfolio strategy using the UCB framework based on risk-adjusted reward. Later, \citet{shen2015portfolio} treat classic portfolio strategies in finance as strategic arms, and they leverage the Thompson sampling method to mix two different strategic arms.

\subsection{Contributions}
\black{We analyze the risk-aware Multi-Armed Bandit (MAB) problem using the Upper Confidence Bound (UCB) methodology proposed by \cite{lai1985asymptotically}. In contrast to their work, we factor in risk by adopting a mean-variance framework. Shifting from a sole focus on rewards to a consideration of risk-reward balance, our algorithm can be applied to a broader range of problems compared to the UCB algorithm proposed by \cite{lai1985asymptotically}. Comparing our methodology to another well-known MAB method proposed by \cite{thompson1933likelihood}, we attain mathematically rigorous bounds on the realized regret. In contrast, much of the literature that adopts the Thompson sampling method can demonstrate the effectiveness of the algorithms only empirically.}\par

\black{Next, \cite{sani2012risk} serves as our primary reference, being the first to introduce the mean-variance MAB problem. In their paper, they assume i.i.d.~arms taking values in $[0,1]$ and state a learning regret of $\mathcal{O}((\log(n))^2/n)$. Note, however, that the algorithm in  \cite{sani2012risk}  requires a fixed runtime $n$. That is, if the algorithm is set up to run for $m$ time periods, then its parameters, and consequently its policy, will  differ from those prescribed for a runtime of $n\neq m$. This fact somewhat limits the interpretation of the asymptotics for the learning regret. By contrast, the algorithm we introduce in this paper does not require that the runtime $n$ is fixed from the start. This algorithm is a member of the extensive family of Lower Confidence Bound algorithms, and we call it the \emph{Risk Aware Lower Confidence Bound (RALCB) algorithm}. In comparison to  \cite{sani2012risk},  we also allow for a larger class of arms distributions, namely the  sub-Gaussian distributions, and we do not the individual arms to be independent.  In addition, we are able to bound the expected regret by $\mathcal{O}((\log(n))/n)$.}\par

\black{Subsequent to our mathematical analysis, we perform numerous numerical simulations to assess and compare the performance of our algorithm against the one proposed by \cite{sani2012risk}. Across diverse risk-return scenarios, our algorithm rapidly identifies optimal arms, consistently surpassing the performance of the MVLCB algorithm suggested by \cite{sani2012risk}. Moreover, we illustrate the robustness of our algorithm by applying it to real-world data taken from financial markets.}\par

The rest of this paper is organized as follows: In Section \ref{section2}, we introduce the MAB problem under the mean-variance setting. In Section \ref{section3}, the confidence-bound algorithm is introduced, and its theoretical characteristics are examined in Section \ref{section4}. A possible extension of MAB problems is examined in Section \ref{section5}. The theoretical findings are then supported by a set of numerical simulations in Section \ref{section6}. Results of supporting proofs are provided in Section \ref{section7}.


\section{Problem Formulation}\label{section2}

In this section, we introduce notation and define the risk-aware MAB problem. The classic $K$-armed bandit problem has a finite action set $\mathcal{A}:=\{1,...,K\}$. The learner interacts with the environment sequentially over time. \black{Let $X^{i}_{t}$ denote the $t$-th observed real-valued reward from pulling arm $i\in \{1,...,K\}$}, modeled as a random variable on a given probability space $(\Omega,\mathcal{F},\mathbb{P})$. We assume that each $X^{i}_{t}$ belongs to the class of sub-Gaussian random variables. That is, there is a constant $\theta \ge 0$ such that for all $\lambda\in\mathbb{R}$: 
\begin{equation}\label{sub-Gaussian ieq}
\mathbb{E}_{\mathbb{P}}[e^{\lambda(X-\mathbb{E}_{\mathbb{P}}[X])}]\leq e^{\frac{\lambda^2 \theta^2 }{2}}.
\end{equation}
The parameter $\theta$ will henceforth be called a \emph{sub-Gaussianity parameter} for $X$. Clearly, all Gaussian random variables are sub-Gaussian, and so are all bounded random variables. We refer to \citet{wainwright2019high} for a discussion of the mathematical properties of sub-Gaussian random variables.

We let $\textbf{X}_t:=(X^{1}_{t},X^{2}_{t},...,X^{K}_{t})$ and assume that the random vectors $\textbf{X}_1,\textbf{X}_2,\dots$ are independent and identically distributed (i.i.d.), while for each fixed $t$, the random variables $X^{1}_{t},X^{2}_{t},...,X^{K}_{t}$ may be dependent. In each round $t$, the learner selects an arm $\pi_t\in \mathcal{A}$ based on the information available at $t$. This will be formalized by defining a \emph{learning policy} as a sequence $\pi_1,\pi_2,\dots$ of random variables  taking values in $\mathcal{A}=\{1,\dots,K\}$, such that $\pi_t$ depends only on $(X^{\pi_1}_{T_{\pi_1,1}},\dots,X^{\pi_{t-1}}_{T_{\pi_{t-1},t-1}},\pi_1,\dots,\pi_{t-1})$, where 
$$T_{i,t}:=\sum_{s=1}^{t} \mathbbm{1}_{\{\pi_s=i\}}$$
counts the number of times arm $i$ has been pulled by time $t$. The environment then samples a reward $X^{\pi_t}_{T_{\pi_t,t}}$ and reveals it to the learner. As a result, each decision of the learner is based on past observations.\par

In the classic MAB problem, the goal of the learner is to maximize the expected cumulative reward. In this paper, we also take the risk into consideration. To effectively balance the expected reward and the risk, we use the same mean-variance setting as in \citet{sani2012risk}. Let the coefficient of absolute risk tolerance $\rho \in [0,1]$ be given and fixed, and define for every arm $i$,
\begin{equation}\label{MV eq}
\black{\textup{MV}^{\rho}_i:=(1-\rho)\sigma_i^2-\rho \mu_i,}
\end{equation}
where $\mu_i$ and $\sigma_i^2$ denote the mean and variance of $X_i$. The optimal arms are given by $\argmin_{i\in \mathcal{A}} \textup{MV}^{\rho}_i$, and we will often use the notation $i^{\rho}_0$ to denote an optimal arm.

\black{Given a learning policy $\pi_t$ and its corresponding performance over $n$ rounds, the empirical mean-variance of the learning policy by round $n$ can be defined as}
\begin{equation}
  \black{\widehat{\textup{MV}}^{\rho}_n(\pi)}:=(1-\rho)\hat{\sigma}^2_n(\pi)-\rho \hat{\mu}_n(\pi),  \label{Eq_1}
\end{equation}
where
\begin{equation}\label{Eq_2}
    \black{\hat{\mu}_n(\pi):=\frac{1}{n}\sum_{i=1}^{K}\sum_{t=1}^{T_{i,n}} X^{i}_{t}}, \;\;\;\;\; \black{\hat{\sigma}^2_n(\pi):=\frac{1}{n}\sum_{i=1}^{K}\sum_{t=1}^{T_{i,n}}(X^{i}_{t}-\hat{\mu}_n(\pi))^2}.
\end{equation}

The objective of the risk-aware MAB problem is to find a policy $\pi$ that asymptotically minimizes  $\widehat{\textup{MV}}^{\rho}_n(\pi)$ for a given risk-tolerance factor $\rho$.

To compare the performance of different learning policies $\pi$ over $n$ rounds, we introduce the learning regret:
\begin{equation}
    \black{\mathcal{R}_n(\pi)}:=\widehat{\textup{MV}}^{\rho}_n(\pi)-\min_{i\in \mathcal{A}} \textup{MV}^{\rho}_i, \label{Eq_5}
\end{equation} 
which is the difference between the policy's empirical mean-variance and the optimal mean-variance. As a consequence, the objective becomes to generate an algorithm for which the regret decreases in expectation as $n$ increases. 

\begin{remark}\label{Rem_1} 
\black{By examining the extreme values of risk tolerance, we observe that the minimization of the learning regret \eqref{Eq_5} includes the following two special cases.
\begin{itemize}
    \item As $\rho =1$, the problem reduces to the standard expected reward maximization problem. 
    \item For $\rho =0 $, the problem transforms into a variance minimization problem.
\end{itemize}}
\end{remark}


\section{RALCB Algorithm}\label{section3}

In this section, we introduce our Risk-Aware Lower-Confidence Bound Algorithm (RALCB) and state our main results. To this end, we fix a parameter $\theta>0$ that is  a common sub-Gaussianity parameter for all arms. That is,  
\black{\begin{equation}\label{theta for all arms eq}
\mathbb{E}_{\mathbb{P}}[e^{\lambda(X_i-\mathbb{E}_{\mathbb{P}}[X_i])}]\leq e^{\frac{\lambda^2 \theta^2 }{2}}
\end{equation}}
 holds for all $\lambda \in \mathbbm{R}$ and $i \in \mathcal{A}$. In typical applications of the multi-armed bandit problem, the exact distribution of the arms will not be known in advance. We therefore emphasize that the parameter $\theta$ only needs to be some number such that \eqref{theta for all arms eq} holds.  In many situations, it will be possible to infer such an a priori bound by means of historical data or by functional bounds on the ranges of certain arms. We will see below that our bounds on the regret will improve when we are able to take a smaller $\theta$, with the optimal lower bound given by  
 
\black{\begin{equation}
    \theta^*:=\inf \left\{\theta: \text { for all } \lambda \in \mathbb{R} \text { and } i \in \mathcal{A}, \mathbb{E}_{\mathbb{P}}\left[e^{\lambda\left(X_i-\mathbb{E}_{\mathbb{P}}\left[X_i\right]\right)}\right] \leq e^{\frac{\lambda^2 \theta^2}{2}}\right\}.
\end{equation}}

For each arm $i$, we can define the empirical mean-variance with $t$ samples as
\begin{equation}
  \widehat{\textup{MV}}^{\rho}_{i,t}:=(1-\rho)\hat{\sigma}_{i,t}^2-\rho \hat{\mu}_{i,t},  \label{Eq_3}
\end{equation}
where
\begin{equation}
    \black{\hat{\mu}_{i,t}:=\frac{1}{t}\sum_{s=1}^{t} X^{i}_{s}, \;\;\;\;\; \hat{\sigma}_{i,t}^2:=\frac{1}{t}\sum_{s=1}^{t}(X^{i}_{s}-\hat{\mu}_{i,t})^2} . \label{Eq_4}
\end{equation}
We furthermore define 
$$\varphi(x):=32(1-\rho)\theta^2 \max\left(\sqrt{x/2},x\right)+(1-\rho)\theta^2 x+ \rho\theta \sqrt{x}$$
and  
\begin{equation}
    V^{\textup{RALCB}}_{i, t-1}:=\widehat{\mathrm{MV}}^{\rho}_{i, t-1}-\varphi \left(\frac{8\log (t)}{T_{i,t-1}}\right).\label{Eq_6}
\end{equation}
\black{In comparison to the Mean-Variance Lower Confidence Bound Algorithm (MVLCB) introduced by \cite{sani2012risk}, 
\begin{equation}
    V^{\textup{MVLCB}}_{i, t-1}=\widehat{\mathrm{MV}}^{\rho}_{i, t-1}-(5+\rho)\sqrt{\frac{\log (1/\delta)}{T_{i,t-1}}},\label{MVLCB1}
\end{equation}
our approach shares the same exploitation term but diverges in the exploration term. First, the exploration term of MVLCB is characterized by a parameter $\delta$, depending upon the time horizon $n$. In contrast, our algorithm adopts a dynamic approach by updating the exploration term at each time step $t$. This adaptation enables us to discuss the asymptotic performance of the regret. Second, with a precise $\varphi$ function, our algorithm can handle all sub-Gaussian random variables, whereas the exploration term in the MVLCB algorithm is only designed for bounded random variables taking values in $[0,1]$.} \par

\black{Furthermore, \cite{zhao2019multi} introduced a relevant algorithm addressing the mean-variance Multi-Armed Bandit (MAB) problem, termed the Mean-Variance Upper Confidence Bound Algorithm (MVUCB), under more restrictive underlying assumptions. It is based on the following index,
\begin{equation}
    V^{\textup{MVUCB}}_{i, t-1}=\widehat{\mathrm{MV}}^{\rho}_{i, t-1}-b\sqrt{\frac{ \log (t)}{T_{i,t-1}}}.\label{MVLCB1}
\end{equation}
In their framework, the exploration term is updated over time, but they do not provide an explicit formulation for the policy parameter $b$.}

\begin{algorithm}[H]
\caption{Risk-Aware Lower-Confidence Bound Algorithm (RALCB)}\label{alg:ralcb}
\footnotesize
\begin{algorithmic}[1]
\State \textbf{Input:} \black{$\theta$, $K$, $\rho$}.
\State \textbf{for each} $t=1,2,...,K$ \textbf{do} 
\State \qquad Play arm $\pi_t=t$ and observe $X^{\pi_t}_1$.
\State \textbf{end for}
\State Update $\widehat{\textup{MV}}^{\rho}_{i,1}$ for $i=1,...,K$ by Equation (\ref{Eq_3}).
\State Set $T_{i,K}=1$ for $i=1,...,K$.

\State \textbf{for each} \black{$t=K+1,K+2,...$} \textbf{do} 
\State \qquad \textbf{for each} $i=1,2,...,K$ \textbf{do} 

\State \qquad \qquad Compute $V^{\textup{RALCB}}_{i, T_{i,t-1}}=\widehat{\mathrm{MV}}^{\rho}_{i, T_{i,t-1}}-\varphi \left(\frac{8\log (t)}{T_{i,t-1}}\right).$
\State \qquad \textbf{end for}
\State \qquad Return $\pi_{t}=\argmin _{i=1,...,K} V^{\textup{RALCB}}_{i, T_{i,t-1}}.$
\State \qquad Update $T_{i,t}=T_{i,t-1}+1$ for $i=1,...,K$.
\State \qquad Observe $X^{\pi_t}_{T_{i,t}}.$
\State \qquad Update $\widehat{\textup{MV}}^{\rho}_{i,T_{i,t}}$ by Equation (\ref{Eq_3}).
\State \textbf{end for}
\end{algorithmic}
\end{algorithm}

Let us explain intuitively how the RALCB algorithm works. At each time $t$, the algorithm chooses an arm that \black{minimizes} the corresponding upper confidence index $V^{\textup{RALCB}}_{i, t-1}$. That is, the algorithm chooses the policy response 
$$\pi_{t}=\argmin _{i=1,...,K} V^{\textup{RALCB}}_{i, t-1}=\argmin_{i\in \mathcal{A}} \left(\widehat{\mathrm{MV}}^{\rho}_{i, t-1}-\varphi \left(\frac{8\log (t)}{T_{i,t-1}}\right)\right).$$
 Therefore, the arm $i$ is selected by the algorithm in either of the following two scenarios:
\begin{itemize}
    \item $\widehat{\mathrm{MV}}^{\rho}_{i, t-1}$ is small: the algorithm tends to exploit the best performer.
    \item $\varphi \left(\frac{8\log (t)}{T_{i,t-1}}\right)$ is large: the algorithm tends to explore alternative arms with insufficient observations.
\end{itemize}
\subsection{Regret Analysis}\label{section4}
\subsubsection{Upper Bound Analysis}
\begin{theorem}\label{Thm_1}
Suppose that $\mathcal{A}^{0}$ is the set of optimal arms given by $\argmin_{i\in \mathcal{A}} \textup{MV}^{\rho}_i$. The expected regret of the RALCB algorithm after $n$ rounds can be upper bounded as:
\begin{equation}
        \black{\mathbb{E}[\mathcal{R}_n(\pi)]}\leq\frac{1}{n}\sum_{i \in \mathcal{A}\setminus \mathcal{A}^{0}}\left(\frac{8\log(n)}{\varphi^{-1}(\Delta_i/2)}+5\right)(\Delta_{i}+2\Gamma_{i, \max }^{2})+\frac{5}{n}\sum_{i=1}^{K} \sigma_i^2, \label{Eq_16}
\end{equation}
where $\Delta_{i}:=\left(\sigma_{i}^{2}-\sigma_{i^{\rho}_0}^{2}\right)-\rho\left(\mu_{i}-\mu_{i^{\rho}_0}\right)$, $\Gamma_{i, \max }:=\max \left\{\left|\mu_{i}-\mu_{h}\right|: h=1, \ldots, K\right\}$, and 
\begin{equation*}
\varphi^{-1}(x) = \left\{
        \begin{array}{ll}
             \left(\frac{-(\rho\theta+16\sqrt{2}(1-\rho)\theta^2)+\sqrt{(\rho\theta+16\sqrt{2}(1-\rho)\theta^2)^2+4(1-\rho)\theta^2 x}}{2(1-\rho)\theta^2}\right)^2 & \quad x \in [0,\frac{17}{2}(1-\rho)\theta^2+\frac{\sqrt{2}}{2}\rho\theta), \\
            \left(\frac{-\rho\theta+\sqrt{\rho^2\theta^2+132(1-\rho)\theta^2x}}{66(1-\rho)\theta^2}\right)^2 & \quad x \in [\frac{17}{2}(1-\rho)\theta^2+\frac{\sqrt{2}}{2}\rho\theta,\infty),
        \end{array}
    \right.
\end{equation*}
is the inverse function of the function $\varphi$ defined in \eqref{theta for all arms eq}.
\end{theorem}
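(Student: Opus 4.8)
The plan is to follow the standard UCB regret-decomposition strategy, adapted to the mean-variance index and the sub-Gaussian concentration. First I would express the expected regret in terms of the expected number of pulls of suboptimal arms. Writing $\widehat{\textup{MV}}^{\rho}_n(\pi)$ in terms of the per-arm empirical means and variances, one relates $\mathcal{R}_n(\pi)$ to $\sum_{i\notin\mathcal{A}^0}T_{i,n}\Delta_i/n$ plus cross terms: because the global empirical variance $\hat\sigma^2_n(\pi)$ is not simply the weighted average of the per-arm empirical variances, there is a correction involving the squared gaps between arm means, which is where the $\Gamma_{i,\max}^2$ terms enter. I expect a clean algebraic identity of the form $\widehat{\textup{MV}}^\rho_n(\pi)-\min_i\textup{MV}^\rho_i \le \frac1n\sum_{i\notin\mathcal A^0}T_{i,n}(\Delta_i+2\Gamma_{i,\max}^2)+(\text{concentration remainder})$, after adding and subtracting the true $\mu_i,\sigma_i^2$; the remainder is controlled by a concentration inequality for $\hat\mu_{i,t}$ and $\hat\sigma^2_{i,t}$ around $\mu_i,\sigma_i^2$, contributing the final $\frac5n\sum_i\sigma_i^2$ term (via a uniform or summed deviation bound, using that squares of centered sub-Gaussians are sub-exponential).

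Second, the core is to bound $\mathbb{E}[T_{i,n}]$ for each suboptimal arm $i$. The function $\varphi$ is constructed precisely so that the event $\{|\widehat{\textup{MV}}^\rho_{i,t}-\textup{MV}^\rho_i|>\varphi(8\log(t)/t)\}$ has probability summably small in $t$: by the sub-Gaussian bound on $\hat\mu_{i,t}$ and the Bernstein-type bound on $\hat\sigma^2_{i,t}$ (the $\max(\sqrt{x/2},x)$ piece is the two-regime sub-exponential tail, the $(1-\rho)\theta^2 x$ piece absorbs the bias of $\hat\sigma^2_{i,t}$, and the $\rho\theta\sqrt x$ piece handles the mean part), a union bound gives $\mathbb{P}(\cdot)\le 4t^{-2}$ or similar. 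On the good event, the standard UCB argument applies: if arm $i$ has been pulled more than $8\log(n)/\varphi^{-1}(\Delta_i/2)$ times then its lower-confidence index exceeds that of an optimal arm, so it is not selected; hence $T_{i,n}\le 8\log(n)/\varphi^{-1}(\Delta_i/2)+1+\sum_t\mathbb{P}(\text{bad event})$, and the tail sum contributes the additive constant ($5$ in the statement). This requires checking that $\varphi$ is continuous, strictly increasing, and invertible with the stated piecewise inverse — the breakpoint $\frac{17}{2}(1-\rho)\theta^2+\frac{\sqrt2}{2}\rho\theta$ is exactly $\varphi$ evaluated at $x=1/2$, where the $\max$ switches branches.

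Third I would assemble: substitute the bound on $\mathbb{E}[T_{i,n}]$ into the regret decomposition, collect the $(\Delta_i+2\Gamma_{i,\max}^2)$ coefficients and the residual $\frac5n\sum_i\sigma_i^2$, and obtain \eqref{Eq_16}.

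The main obstacle I anticipate is the concentration analysis of the empirical variance $\hat\sigma^2_{i,t}$ for merely sub-Gaussian (not bounded, not sub-Gaussian-squared) arms: one must show $|\hat\sigma^2_{i,t}-\sigma_i^2|$ is controlled with the right constants to justify the specific coefficients ($32$, the $\max(\sqrt{x/2},x)$ form) inside $\varphi$. This involves writing $\hat\sigma^2_{i,t}=\frac1t\sum_s(X^i_s-\mu_i)^2-(\hat\mu_{i,t}-\mu_i)^2$, bounding the first term via a sub-exponential (Bernstein) concentration inequality for sums of i.i.d.\ sub-exponential variables $(X^i_s-\mu_i)^2$ whose sub-exponential parameters are governed by $\theta$, and bounding the second term via the sub-Gaussian deviation of $\hat\mu_{i,t}$; keeping track of absolute constants through both pieces and through the decomposition $\widehat{\textup{MV}}^\rho_{i,t}-\textup{MV}^\rho_i=(1-\rho)(\hat\sigma^2_{i,t}-\sigma_i^2)-\rho(\hat\mu_{i,t}-\mu_i)$ is the delicate part. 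The secondary obstacle is handling the non-trivial algebra of $\hat\sigma^2_n(\pi)$ as a pooled variance across arms with different means, to extract the $\Gamma_{i,\max}^2$ correction cleanly; I would do this by the bias–variance-style decomposition of the pooled sum of squares about the global mean into within-arm and between-arm components.
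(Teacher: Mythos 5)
Your proposal follows essentially the same route as the paper: the regret decomposition of \citet{sani2012risk} (pathwise bound in terms of $T_{i,n}\widehat\Delta_i$ plus the cross-mean terms yielding $\Gamma_{i,\max}^2$, with the $\tfrac5n\sum_i\sigma_i^2$ remainder after taking expectations), concentration of $\widehat{\mathrm{MV}}^{\rho}_{i,t}$ built from the sub-Gaussian bound on $\hat\mu_{i,t}$ and a Bernstein/sub-exponential bound on $\tfrac1t\sum_s(X^i_s-\mu_i)^2$ (which is exactly how the paper obtains the $32\theta^2\max(\sqrt{x/2},x)$ and $\theta^2x$ pieces of $\varphi$), and the standard UCB counting argument with $\ell=\lceil 8\log(t)/\varphi^{-1}(\Delta_i/2)\rceil$ and $\delta=2/t^4$ to get $\mathbb{E}[T_{i,n}]\le 8\log(n)/\varphi^{-1}(\Delta_i/2)+5$. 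Your identification of the breakpoint of $\varphi^{-1}$ as $\varphi(1/2)$ and of the variance-concentration step as the delicate part both match the paper's treatment, so the approach is correct and not materially different.
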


\black{The regret upper bound presented by \cite{sani2012risk} is a finite-time version due to its dependence on the confidence level $\delta$, which requires that the number $n$ of rounds is fixed from the beginning. As a result, their algorithm requires prior knowledge of $n$. In contrast, our paper adopts a dynamic approach by updating the confidence index at each time step $t$. This adaptation enables us to deal with scenarios where the horizon is not predetermined. It also allows us to discuss the asymptotic performance of the regret.}

\begin{remark}
Based on the results in Theorem \ref{Thm_1}, we can notice that the upper bound of the expected regret decreases as \black{$\mathcal{O}((\log (n))/n)$}. Taking $n\rightarrow \infty$, the expected regret of the RALCB algorithm satisfies:
\begin{equation}
        \black{\lim_{n \rightarrow \infty}\mathbb{E}[\mathcal{R}_n(\pi)]=0.} \label{Eq_2.8}
\end{equation}
According to Definition 4.2 in \citet{zhao2019multi}, the RALCB algorithm is hence a consistent policy. By substituting Equation (\ref{Eq_5}) into Equation (\ref{Eq_2.8}), we can obtain:
\begin{equation}
        \black{\lim_{n \rightarrow \infty}\mathbb{E}[\widehat{\textup{MV}}^{\rho}_n(\pi)]=\min_{i\in \mathcal{A}} \textup{MV}^{\rho}_i. }\label{Eq_2.9}
\end{equation}
According to Equation (\ref{Eq_2.9}), we can conclude that the expectation of the empirical mean-variance of the RALCB algorithm converges to the mean-variance of the optimal arms.
\end{remark}

\begin{remark}
\black{By examining the increasing function $\varphi(x)$, expressed as:
$$
\varphi(x)= \begin{cases}32(1-\rho)\theta^2\sqrt{x / 2}+\rho \theta \sqrt{x}+(1-\rho)\theta^2 x & x \in\left[0, \frac{1}{2}\right) \\ 32 (1-\rho)\theta^2 x+\rho \theta \sqrt{x} & x \in\left[\frac{1}{2}, \infty\right),\end{cases}
$$
it is evident that $\varphi(x)$ is a decreasing function of $\theta$ if $x$ is fixed. Thus, by substituting $\theta$ with a smaller $\tilde\theta$, the value $\varphi^{-1}(\Delta_i/2)$ increases, and in turn the upper bound \eqref{Eq_16}
  for the expected regret becomes tighter. The optimal bound is achieved for 
$$\theta^*:=\inf \left\{\theta: \text { for all } \lambda \in \mathbb{R} \text { and } i \in \mathcal{A}, \mathbb{E}_{\mathbb{P}}\left[e^{\lambda\left(X_i-\mathbb{E}_{\mathbb{P}}\left[X_i\right]\right)}\right] \leq e^{\frac{\lambda^2 \theta^2}{2}}\right\}.$$ }
\black{We emphasize once again, though,that in typical applications the exact distributions of the arm will be unknown, which highlights the fact that our algorithm works with any a priori bound $\theta\ge\theta^*$.}
\end{remark}

Moreover, we can derive a high probability upper bound for the regret of the RALCB algorithm.
\begin{theorem}\label{Thm_2}
\black{Suppose that $\mathcal{A}^{0}$ is the set of optimal arms given by $\argmin_{i\in \mathcal{A}} \textup{MV}^{\rho}_i$. With probability at least $1-K/n^3$, the regret of the RALCB algorithm at time $n$ is upper bounded by:}
\begin{equation}
   \mathcal{R}_n(\pi) \leq \frac{1}{n}\sum_{i \in \mathcal{A}\setminus \mathcal{A}^{0}} \left(\frac{8\log(n)}{\varphi^{-1}(\Delta_i/2)}+5\right)(\Delta_{i}+2\Gamma_{i, \max }^{2})+\frac{\varphi\left(\frac{8K\log (n)}{n}\right)}{\theta}+\frac{16\sqrt{2}K\theta\log (n)}{n}.
\end{equation}
\end{theorem}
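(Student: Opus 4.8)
The plan is to follow the same structural decomposition used for the expected-regret bound in Theorem \ref{Thm_1}, but to track a single high-probability event rather than taking expectations. First I would introduce, for each arm $i$ and each number of samples $t$, the confidence event
$$
\mathcal{E}_{i,t}:=\left\{\bigl|\widehat{\textup{MV}}^{\rho}_{i,t}-\textup{MV}^{\rho}_i\bigr|\le \varphi\!\left(\tfrac{8\log n}{t}\right)\right\},
$$
and argue, via the sub-Gaussian concentration inequalities underlying the definition of $\varphi$ (the same ones used to establish Theorem \ref{Thm_1}), that $\mathbb{P}(\mathcal{E}_{i,t}^c)$ is of order $n^{-3}$ up to the combinatorial factor coming from a union over the at most $n$ possible values of $T_{i,n}$. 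Taking a union bound over the $K$ arms then yields that the global good event $\mathcal{E}:=\bigcap_{i,t}\mathcal{E}_{i,t}$ holds with probability at least $1-K/n^3$; the entire rest of the argument is carried out deterministically on $\mathcal{E}$.

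On the event $\mathcal{E}$, the standard LCB argument bounds the number of pulls of each suboptimal arm: if $i\in\mathcal{A}\setminus\mathcal{A}^0$ is pulled at round $t$, then $V^{\textup{RALCB}}_{i,T_{i,t-1}}\le V^{\textup{RALCB}}_{i^\rho_0,T_{i^\rho_0,t-1}}$, and combining this with the two-sided bounds on $\mathcal{E}$ forces $\varphi\bigl(8\log(t)/T_{i,t-1}\bigr)\ge \Delta_i/2$, i.e. $T_{i,t-1}\le 8\log(n)/\varphi^{-1}(\Delta_i/2)$. This is exactly the mechanism already invoked in Theorem \ref{Thm_1}, so I would reuse it, obtaining $T_{i,n}\le 8\log(n)/\varphi^{-1}(\Delta_i/2)+5$ for each suboptimal $i$ (the additive $5$ absorbing the initialization rounds and rounding). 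One then has to translate the per-arm pull counts into a bound on $\mathcal{R}_n(\pi)=\widehat{\textup{MV}}^{\rho}_n(\pi)-\min_i\textup{MV}^{\rho}_i$; here I would use the same decomposition of the empirical mean-variance of the whole policy into contributions from individual arms that produces the factor $(\Delta_i+2\Gamma_{i,\max}^2)$ in \eqref{Eq_16}, giving the first sum in the claimed bound. The novelty relative to Theorem \ref{Thm_1} is the treatment of the optimal arm(s): since we are no longer averaging, we cannot discard the fluctuation of $\widehat{\textup{MV}}^{\rho}_{i^\rho_0}$ around $\textup{MV}^{\rho}_{i^\rho_0}$. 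Instead, on $\mathcal{E}$ this fluctuation is at most $\varphi\bigl(8\log(n)/T_{i^\rho_0,n}\bigr)$, and since $T_{i^\rho_0,n}\ge n - \sum_{i\notin\mathcal{A}^0}T_{i,n}$ is of order $n$ (at least $n/K$ up to lower-order terms), monotonicity of $\varphi$ bounds this by roughly $\varphi\bigl(8K\log(n)/n\bigr)$, which after dividing through by the normalization and using the explicit piecewise form of $\varphi$ yields the terms $\varphi\bigl(8K\log(n)/n\bigr)/\theta$ and $16\sqrt2\,K\theta\log(n)/n$.

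The main obstacle I anticipate is the bookkeeping in the last step: getting the constants $1/\theta$ and $16\sqrt2\,K\theta$ exactly right requires carefully matching the argument $8K\log(n)/n$ against the breakpoint of the piecewise definition of $\varphi$ (the regime $x<1/2$ versus $x\ge1/2$), and correctly splitting off the linear-in-$x$ piece $(1-\rho)\theta^2 x$ — which at $x=8K\log(n)/n$ contributes $O(K\theta^2\log(n)/n)$ — from the $\sqrt{x}$ pieces, while also controlling the $\varphi$-fluctuation of suboptimal arms (whose pull counts are only $O(\log n)$, hence whose $\varphi$-terms are $O(1)$ and must be shown to be dominated). The concentration step and the LCB comparison step are essentially inherited from Theorem \ref{Thm_1} and should present no difficulty beyond replacing "in expectation" by "on $\mathcal{E}$"; it is the explicit arithmetic reconciling the $\varphi$ and $\varphi^{-1}$ expressions with the stated closed form that will require the most care.
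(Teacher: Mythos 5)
Your overall strategy coincides with the paper's: build the good event by a union bound over the $K$ arms and the at most $n$ sample counts (this is Lemma \ref{Lem_11_2} with $\delta=2/t^4$), bound the pulls of each suboptimal arm on that event, and feed the result into the almost-sure regret decomposition \eqref{Eq_7}. The one place where your sketch diverges---and where, as written, the step would fail to produce the stated bound---is the origin of the terms $\varphi\left(\frac{8K\log(n)}{n}\right)/\theta$ and $\frac{16\sqrt{2}K\theta\log(n)}{n}$. You attribute them to the fluctuation of the optimal arm alone, using $T_{i^{\rho}_0,n}\geq n/K$ up to lower-order terms; but for a unique optimal arm one in fact has $T_{i^{\rho}_0,n}=n-O(\log n)$, so that route would yield $\varphi(8\log(n)/n)$ \emph{without} the factor $K$, and it ignores both the fluctuation terms of the suboptimal arms and the cross terms coming from $\widehat{\Gamma}_{i,h}^{2}$. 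In the paper the $K$ arises differently: on the good event one has $\widehat{\Delta}_{i}\leq\Delta_i+\varphi(8\log(n)/T_{i,n})$ for \emph{every} arm and $|\widehat{\Gamma}_{i,h}|\leq|\Gamma_{i,h}|+\theta\sqrt{8\log(n)/T_{i,n}}+\theta\sqrt{8\log(n)/T_{h,n}}$, and the aggregate fluctuation
\begin{equation*}
\frac{1}{n}\sum_{i=1}^{K} T_{i,n}\,\varphi\!\left(\frac{8\log(n)}{T_{i,n}}\right)
\end{equation*}
together with the expanded cross terms is bounded via Jensen's inequality applied to the concave pieces of $\varphi$ under the constraint $\sum_i T_{i,n}\leq n$ (worst case $T_{i,n}=n/K$ for all $i$), which is exactly where the argument $8K\log(n)/n$ and the residual $16\sqrt{2}K\theta\log(n)/n$ come from. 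With that correction to the final aggregation step, your argument matches the paper's.
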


\begin{remark}
\black{For $\rho=1$, the risk-averse MAB model simplifies to the classic risk-neutral MAB (i.e., a reward maximization problem). The index becomes: 
$$V^{\textup{RALCB}}_{i, t-1}=\widehat{\mu}_{i, t-1}+\theta \sqrt{\frac{8\log (t)}{T_{i,t-1}}}.$$
And, the regret upper bounds presented in Theorems \ref{Thm_1} and \ref{Thm_2} replicate the bounds on risk-neutral regret, comparable to the regret achieved by \citet{auer2002finite} who explored the UCB approach for Gaussian distributions.}\\
\black{For $\rho =0$, the risk-averse MAB model transforms into the risk-focused MAB (i.e., a variance minimization problem). The index becomes: 
$$V^{\textup{RALCB}}_{i, t-1}=\widehat{\sigma}^2_{i, t-1}-32\theta^2 \max\left(\sqrt{\frac{8\log (t)}{T_{i,t-1}}},\frac{8\log (t)}{T_{i,t-1}}\right)-\theta^2 \frac{8\log (t)}{T_{i,t-1}}.$$
The regret upper bounds, as provided in Theorems \ref{Thm_1} and \ref{Thm_2}, produce comparable results achieved by \citet{audibert2009exploration} who explored the UCB approach for bounded random variables.}
\end{remark}

\subsubsection{Drawing multiple arms}\label{section5}
In contrast to many papers on multi-armed bandits, we do not assume that the individual arms are independent. A dependence between arms arises naturally, e.g., if we allow the learner to pull multiple arms at each round. Then, the $K$-armed problem is transformed into a $P$-armed problem, where $P$ counts the total of possible arm combinations. As a result, we now have a new action set $\mathcal{A}^{*}:=\{1,...,P\}$. At time $t$, the reward $Y^{j}_{t}$ of an arm $j\in\mathcal{A}^*$ is a convex combination of the rewards $X^{i}_t$ of single arms. For example,
  $$
     \begin{bmatrix}
         Y^1_t\\
         Y^2_t\\ 
         \vdots\\ 
         Y^P_t
     \end{bmatrix}
     =
     \begin{bmatrix}
         w_{11} & w_{12} & \cdots & w_{1K}\\
         w_{21} & w_{22} & \cdots & w_{2K}\\ 
         \vdots & \vdots & \ddots & \vdots\\ 
         w_{P1} & w_{P2} & \cdots & w_{PK} 
     \end{bmatrix}
     \times
     \begin{bmatrix}
        X^1_t\\
        X^2_t\\ 
         \vdots\\ 
        X^{K}_t
     \end{bmatrix},
  $$
 where $w_{i,j} \in [0,1]$ and $\sum_{i \in \mathcal{A}}w_{i,j}=1$ for all $i\in\{1,...,K\}$ and $j \in \{1,...,P\}$. The weights $w_{i,j}$ are specified at the start and fixed over rounds. We illustrate the above transformation in the following example:
 \begin{example}\label{example}
Starting with the 3-armed bandit problem (i.e., $K=3$), we want to pull two arms at each round. The problem is then converted into a 3-armed bandit problem (i.e., $P = 3$). We assign equal weights to two arms in each combination (i.e., $w_{1,j}=w_{2,j}=0.5$ for each $j$). Then we have:
 $$
     \begin{bmatrix}
         Y^1_t\\
         Y^2_t\\ 
         Y^2_t
     \end{bmatrix}
     =
     \begin{bmatrix}
         0.5 & 0.5 & 0\\
         0.5 & 0 & 0.5\\ 
         0 & 0.5 & 0.5
     \end{bmatrix}
     \times
     \begin{bmatrix}
        X^1_t\\
        X^2_t\\ 
        X^{3}_t
     \end{bmatrix}.
  $$
 \end{example}
 Theorem 2.7 in \citet{rivasplata2012subgaussian} reveals a useful property of sub-Gaussian random variables:
\begin{proposition}
\black{Let $X$ and $Y$ be sub-Gaussian random variables with parameters $\theta_X$ and $\theta_Y$ respectively. Then for any $w \in [0,1]$, $wX+(1-w)Y$ is still a sub-Gaussian random variable with a parameter $w\theta_X+(1-w)\theta_Y$.} \end{proposition}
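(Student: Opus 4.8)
The plan is to prove the statement directly from the definition of sub-Gaussianity given in \eqref{sub-Gaussian ieq}, applied to the convex combination $Z := wX + (1-w)Y$. First I would compute the centered moment generating function of $Z$. Since $Z - \mathbb{E}_{\mathbb{P}}[Z] = w(X - \mathbb{E}_{\mathbb{P}}[X]) + (1-w)(Y - \mathbb{E}_{\mathbb{P}}[Y])$, I would write
\[
\mathbb{E}_{\mathbb{P}}\bigl[e^{\lambda(Z-\mathbb{E}_{\mathbb{P}}[Z])}\bigr] = \mathbb{E}_{\mathbb{P}}\bigl[e^{\lambda w(X-\mathbb{E}_{\mathbb{P}}[X])} e^{\lambda (1-w)(Y-\mathbb{E}_{\mathbb{P}}[Y])}\bigr].
\]
Since $X$ and $Y$ need not be independent here, I cannot simply factor this expectation into a product. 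Instead I would apply Hölder's inequality with conjugate exponents $p = 1/w$ and $q = 1/(1-w)$ (valid for $w \in (0,1)$; the endpoint cases $w \in \{0,1\}$ are trivial), which gives
\[
\mathbb{E}_{\mathbb{P}}\bigl[e^{\lambda w(X-\mathbb{E}_{\mathbb{P}}[X])} e^{\lambda (1-w)(Y-\mathbb{E}_{\mathbb{P}}[Y])}\bigr] \le \Bigl(\mathbb{E}_{\mathbb{P}}\bigl[e^{\lambda(X-\mathbb{E}_{\mathbb{P}}[X])}\bigr]\Bigr)^{w}\Bigl(\mathbb{E}_{\mathbb{P}}\bigl[e^{\lambda(Y-\mathbb{E}_{\mathbb{P}}[Y])}\bigr]\Bigr)^{1-w}.
\]

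Next I would bound each factor using the sub-Gaussianity of $X$ and $Y$ with parameters $\theta_X$ and $\theta_Y$: the right-hand side is at most $\bigl(e^{\lambda^2\theta_X^2/2}\bigr)^{w}\bigl(e^{\lambda^2\theta_Y^2/2}\bigr)^{1-w} = \exp\bigl(\tfrac{\lambda^2}{2}(w\theta_X^2 + (1-w)\theta_Y^2)\bigr)$. To conclude with parameter $w\theta_X + (1-w)\theta_Y$ it then suffices to observe that $w\theta_X^2 + (1-w)\theta_Y^2 \le (w\theta_X + (1-w)\theta_Y)^2$, which I would verify by noting the difference equals $-w(1-w)(\theta_X-\theta_Y)^2 \le 0$ (equivalently, this is Jensen's inequality applied to the convex function $s \mapsto s^2$, read in the reverse direction, or just expanding the square). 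Chaining these inequalities yields $\mathbb{E}_{\mathbb{P}}[e^{\lambda(Z-\mathbb{E}_{\mathbb{P}}[Z])}] \le e^{\lambda^2(w\theta_X + (1-w)\theta_Y)^2/2}$ for all $\lambda \in \mathbb{R}$, which is exactly the claim.

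Since the statement is attributed to Theorem 2.7 in \citet{rivasplata2012subgaussian}, an alternative is simply to cite that reference; but I would prefer to include the short self-contained argument above, since it also clarifies that the bound holds \emph{without} any independence assumption on $X$ and $Y$, which is precisely the feature needed in Section~\ref{section5}. I do not anticipate a genuine obstacle here: the only mild subtlety is handling the boundary cases $w \in \{0,1\}$ separately (where $Z$ is just $X$ or $Y$ and there is nothing to prove) so that the Hölder exponents $1/w$ and $1/(1-w)$ are well defined, and making sure the moment generating functions are finite — but finiteness is guaranteed by the very sub-Gaussian bounds \eqref{theta for all arms eq} we are assuming for $X$ and $Y$.
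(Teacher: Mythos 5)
The paper does not actually prove this proposition; it simply cites Theorem 2.7 of \citet{rivasplata2012subgaussian}, so your decision to supply a self-contained argument is reasonable. Your setup and the H\"older step are fine, but the final algebraic step is backwards, and this breaks the proof of the stated parameter. After H\"older with exponents $1/w$ and $1/(1-w)$ you correctly arrive at the bound $\exp\bigl(\tfrac{\lambda^2}{2}(w\theta_X^2+(1-w)\theta_Y^2)\bigr)$. You then assert $w\theta_X^2+(1-w)\theta_Y^2\le (w\theta_X+(1-w)\theta_Y)^2$, but the identity you yourself quote, namely $(w\theta_X+(1-w)\theta_Y)^2-(w\theta_X^2+(1-w)\theta_Y^2)=-w(1-w)(\theta_X-\theta_Y)^2\le 0$, proves exactly the \emph{reverse} inequality: by convexity of $s\mapsto s^2$ the square of the average is \emph{at most} the average of the squares. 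Concretely, with $w=\tfrac12$, $\theta_X=0$, $\theta_Y=1$, your claimed inequality reads $\tfrac12\le\tfrac14$, which is false. So your argument only establishes sub-Gaussianity with the larger parameter $\sqrt{w\theta_X^2+(1-w)\theta_Y^2}\ge w\theta_X+(1-w)\theta_Y$, i.e.\ a strictly weaker conclusion than the proposition whenever $\theta_X\neq\theta_Y$.

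The fix is to choose the H\"older exponents proportionally to the sub-Gaussian parameters rather than to the weights. Writing $a:=w\theta_X$ and $b:=(1-w)\theta_Y$ (and disposing separately of the degenerate cases $a=0$ or $b=0$, where one factor is deterministic after centering up to its own sub-Gaussian bound), apply H\"older with $1/p=a/(a+b)$ and $1/q=b/(a+b)$. Each factor is then bounded using sub-Gaussianity of $X$ and $Y$ at the rescaled arguments $\lambda w p$ and $\lambda(1-w)q$, giving the exponent $\tfrac{\lambda^2}{2}\bigl(a^2 p+b^2 q\bigr)=\tfrac{\lambda^2}{2}(a+b)^2$, since $a^2p+b^2q=\frac{a^2}{1/p}+\frac{b^2}{1/q}$ is minimized at exactly $(a+b)^2$ for this choice (Cauchy--Schwarz in Engel form). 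This yields the claimed parameter $w\theta_X+(1-w)\theta_Y$ without any independence assumption. Alternatively, note that for the paper's purposes in Section~\ref{section5} any finite sub-Gaussianity parameter for the combination suffices, so your weaker bound $\sqrt{w\theta_X^2+(1-w)\theta_Y^2}$ would still support the downstream argument --- but it does not prove the proposition as stated.
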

The above proposition states that without an independence assumption, a convex combination of sub-Gaussian random variables is still sub-Gaussian but with an updated parameter. Therefore, our results can be used when the learner can pull more than one arm during each round.

\begin{example}
    \black{To demonstrate the limited impact of drawing multiple arms on the algorithm's performance, we conducted tests on the bandit problem with $K=3$, where the arms are normally distributed and each pair of arms share the same correlation coefficients $\tau_{1,2}=\tau_{2,3}=\tau_{1,3}=0.2$. The parameter settings for each arm are $\mu =(0.1,0.41,0.79)$ and $\sigma^2=(0.05,0.44,0.85)$. Subsequently, we performed the bandit transformation based on the matrix outlined in Example \ref{example} (i.e., $P = 3$). Presented below is the cumulative regret comparison between the two cases:}

\vspace{-10mm}
\begin{figure}[H]
\centering
      \begin{overpic}[width=0.6\textwidth]{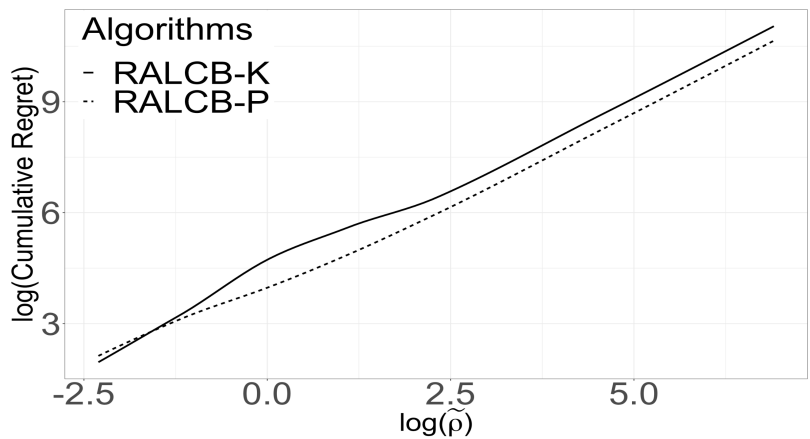}
        \put(558,128){$\widetilde{}$}
      \end{overpic}
      \vspace{-10mm}
\caption{\black{Cumulative regret comparison averaged over 1000 runs for RALCB algorithms before and after the transformation over different choices of $\widetilde\rho:=\rho/(1-\rho)$.}} \label{fig:KP}
\end{figure}
\end{example}

\black{The presented results indicate that the algorithm's performance is minimally affected by the above transformation. This observation confirms the capability of our algorithm to effectively deal with pulling multiple arms.}

\section{Numerical Experiments}\label{section6}
We perform a numerical analysis of our algorithm in this section. \citet{vakili2016risk} analyze the risk-aware MAB problem, but the square of the underlying $X$ is assumed to be sub-Gaussian in their algorithm, making comparison with our results difficult. \citet{zhu2020thompson} and \cite{liu2020risk} propose two risk-averse MAB algorithms with a commitment to good numerical performance, although both algorithms are explicitly targeting Gaussian bandits. As a result, we use the MVLCB method proposed by \citet{sani2012risk} as the benchmark for a fair comparison. Note that  \citet{sani2012risk}  use the mean-variance functional
$$\text{mv}^{\widetilde\rho}_i:=\sigma_i^2-\widetilde\rho\mu_i,
$$
whose minimization is equivalent to the minimization of 
$$\textup{MV}^{\rho}_i=(1-\rho)\sigma_i^2-\rho \mu_i=(1-\rho)\text{mv}^{\widetilde\rho}_i
$$
if we let $$\widetilde\rho:=\frac\rho{1-\rho}$$ 
as long as $\rho\in[0,1)$. To facilitate the comparison between our results and the ones in \citet{sani2012risk}, we will use $\widetilde\rho$ as the risk aversion parameter throughout this section.
A sketch of the MVLCB algorithm from \citet{sani2012risk} is reported in Algorithm \ref{algo2}. 

\begin{algorithm}[H]
\caption{The Mean-Variance Lower Confidence Bound Algorithm (MVLCB)}\label{algo2}
\footnotesize
\begin{algorithmic}[1]
\State \textbf{Input:} \black{$n$, $\theta$, $K$, $\widetilde\rho$}.
\State \textbf{for each} $t=1,2,...,K$ \textbf{do} 
\State \qquad Play arm $\pi_t=t$ and observe $X^{\pi_t}_1$.
\State \textbf{end for}
\State Update $\widehat{\textup{MV}}^{\widetilde\rho}_{i,1}$ for $i=1,...,K$ by Equation (\ref{Eq_3}).
\State Set $T_{i,K}=1$ for $i=1,...,K$.

\State \textbf{for each} \black{$t=K+1,K+2,...,n$} \textbf{do} 
\State \qquad \textbf{for each} $i=1,2,...,K$ \textbf{do} 

\State \qquad \qquad Compute $V^{\textup{MVLCB}}_{i, t-1}=\widehat{\mathrm{MV}}^{\widetilde\rho}_{i, t-1}-(5+\widetilde\rho)\sqrt{\frac{8 \log (n)}{\black{T_{i,t-1}}}}.$
\State \qquad \textbf{end for}
\State \qquad Return $\pi_{t}=\argmin _{i=1,...,K} V^{\textup{MVLCB}}_{i, T_{i,t-1}}.$
\State \qquad Update $T_{i,t}=T_{i,t-1}+1$ for $i=1,...,K$.
\State \qquad Observe $X^{\pi_t}_{T_{i,t}}.$
\State \qquad Update $\widehat{\textup{MV}}^{\widetilde\rho}_{i,T_{i,t}}$ by Equation (\ref{Eq_3}).
\State \textbf{end for}
\end{algorithmic}
\end{algorithm}

We report numerical simulations to validate our theoretical results in the previous sections. Here, we consider the bandit problem with $K=15$ arms, where the reward of each arm is independent and follows a Gaussian distribution. The parameter setting for each arm is the same as the experiments from \citet{sani2012risk}, which guarantees that the observations lie in the interval $[0,1]$ with $95\%$ confidence.
\begin{itemize}
    \item $\mu =(0.1, 0.2, 0.23, 0.27, 0.32, 0.32, 0.34, 0.41, 0.43, 0.54, 0.55, 0.56, 0.67, 0.71, 0.79).$
    \item $\sigma^2=(0.05, 0.34, 0.28, 0.09, 0.23,
0.72, 0.19, 0.14, 0.44, 0.53, 0.24, 0.36, 0.56, 0.49, 0.85).$
\end{itemize}
We compare the performance of two algorithms under three scenarios: 
\begin{itemize}
    \item Variance minimization: $\widetilde\rho=10^{-3}$, optimal arm is $i^{0.001}_0=1.$
    \item Risk-return balance: $\widetilde\rho=1$, optimal arm is $i^{1}_0=11.$
    \item Reward maximization: $\widetilde\rho=10^{3}$, optimal arm is $i^{1000}_0=15.$
\end{itemize}
\subsection{Variance Minimization Scenario}
In the variance minimization scenario (i.e., $\widetilde\rho=10^{-3}$), the optimal arm is $i^{0.001}_0=1$. We run both algorithms over 1000 runs with a time horizon of $n=30000$. In Figure \ref{fig:1}, we record the pulls of arms at each time point for two algorithms.

\begin{figure}[H]
    \subfloat[Arm pulling record by the MVLCB algorithm]{%
      \begin{overpic}[width=0.48\textwidth]{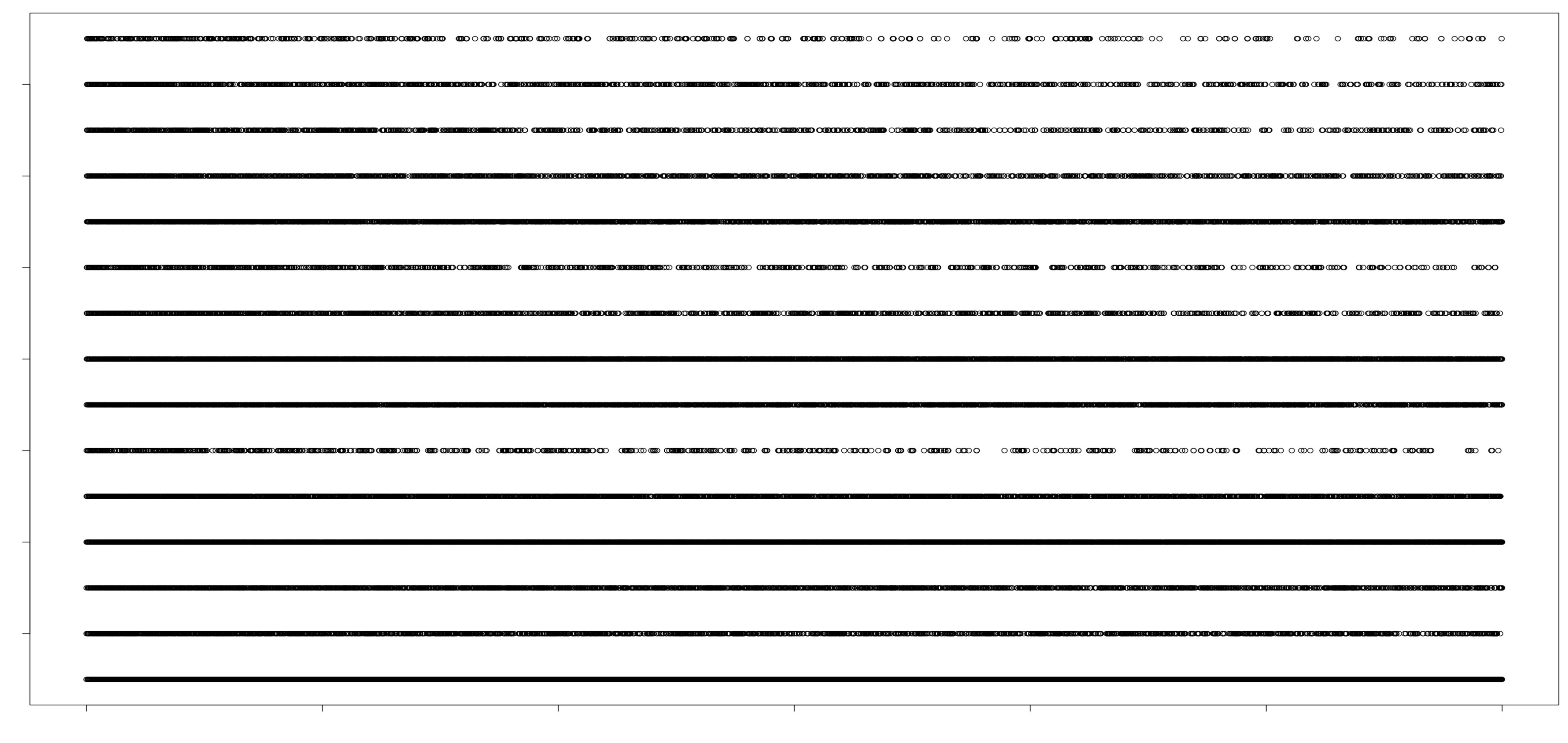}
        \put(-10,60){\scriptsize 2}
        \put(-10,120){\scriptsize 4}
        \put(-10,180){\scriptsize6}
        \put(-10,240){\scriptsize8}
        \put(-20,300){\scriptsize10}
        \put(-20,360){\scriptsize12}
        \put(-20,420){\scriptsize14}
        \put(60,-10){\scriptsize0}
        \put(170,-10){\scriptsize5000}
        \put(300,-10){\scriptsize10000}
        \put(450,-10){\scriptsize15000}
        \put(600,-10){\scriptsize20000}
        \put(750,-10){\scriptsize25000}
        \put(900,-10){\scriptsize30000}
      \end{overpic}
    }\hfill
    \subfloat[Arm pulling record by the RALCB algorithm]{%
      \begin{overpic}[width=0.48\textwidth]{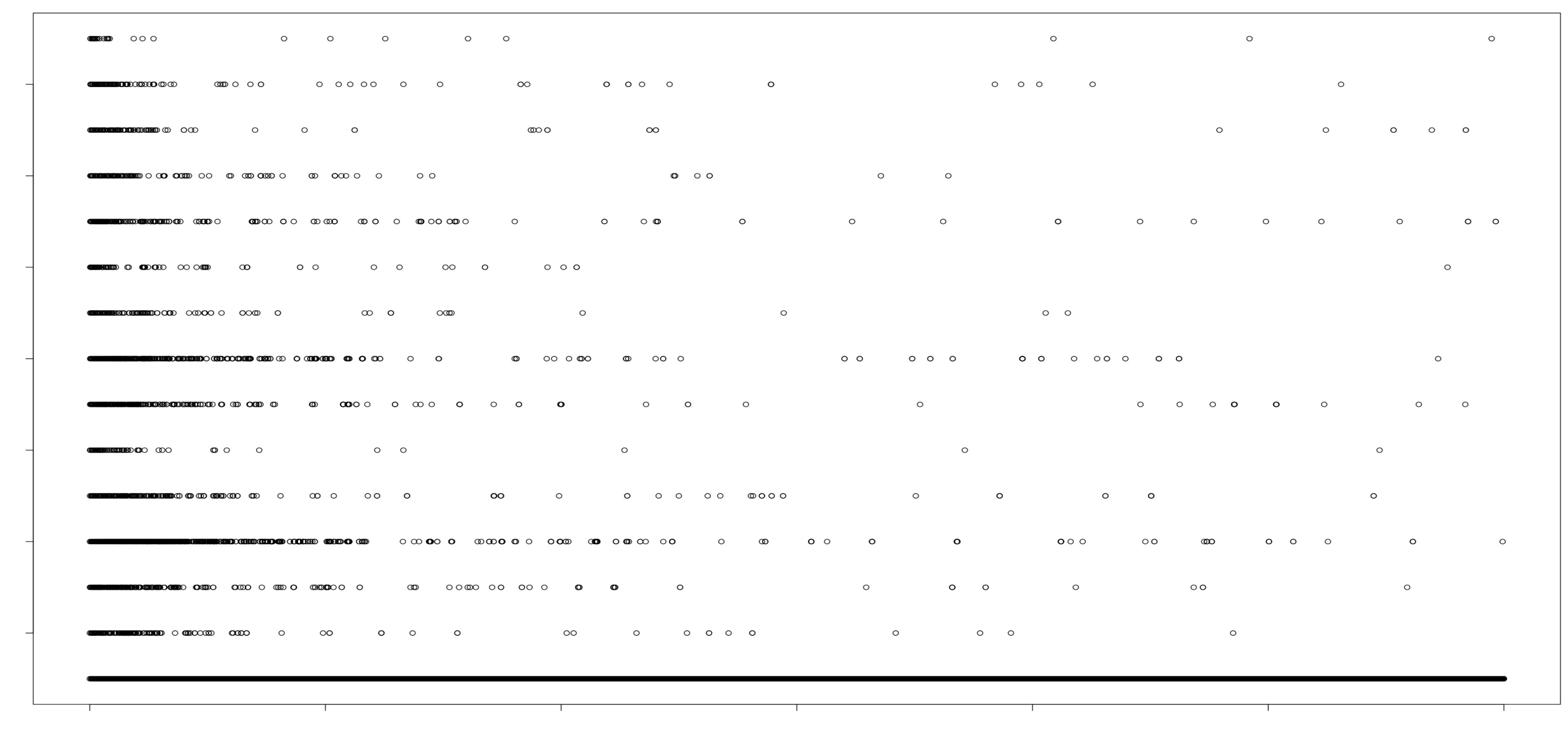}
         \put(-10,60){\scriptsize 2}
        \put(-10,120){\scriptsize 4}
        \put(-10,180){\scriptsize6}
        \put(-10,240){\scriptsize8}
        \put(-20,300){\scriptsize10}
        \put(-20,360){\scriptsize12}
        \put(-20,420){\scriptsize14}
        \put(60,-10){\scriptsize0}
        \put(170,-10){\scriptsize5000}
        \put(300,-10){\scriptsize10000}
        \put(450,-10){\scriptsize15000}
        \put(600,-10){\scriptsize20000}
        \put(750,-10){\scriptsize25000}
        \put(900,-10){\scriptsize30000}
      \end{overpic}
    }
    \caption{Record of arms pulled at each time point in a single simulation for two algorithms under $\widetilde\rho=10^{-3}$.  \label{fig:1}}
\end{figure}

From Figure \ref{fig:1}, we can conclude that:

\begin{enumerate}
 \item Under the RALCB algorithm, the optimal arm (i.e., $i^{0.001}_0=1$) has been pulled more frequently than under the MVLCB algorithm.

\item In the long run, the RALCB algorithm starts picking the best arms consistently, with a small chance of exploration.

\item Under the variance minimization scenario, the MVLCB algorithm fails to identify the optimal arms.

\end{enumerate}
In Figure \ref{fig:2}, we present the cumulative regret and mean regret, which is averaged over 1000 runs with a time horizon of $n=30000$.

\begin{figure}[H]
    \subfloat[Cumulative regret]{%
      \begin{overpic}[width=0.48\textwidth]{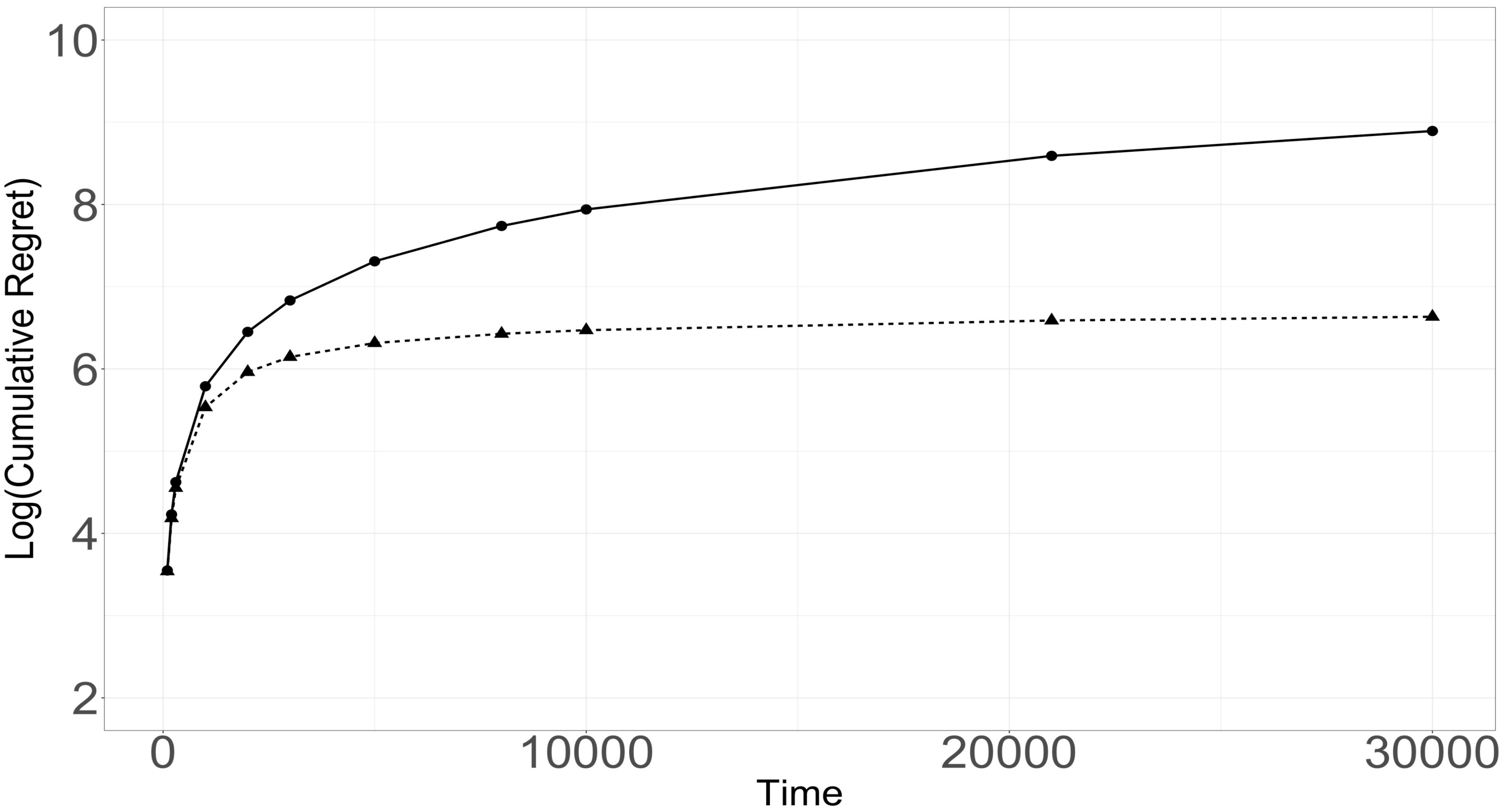}
        \put(300,450){MVLCB}
        \put(300,250){RALCB}
      \end{overpic}
    }\hfill
    \subfloat[Mean regret]{%
      \begin{overpic}[width=0.48\textwidth]{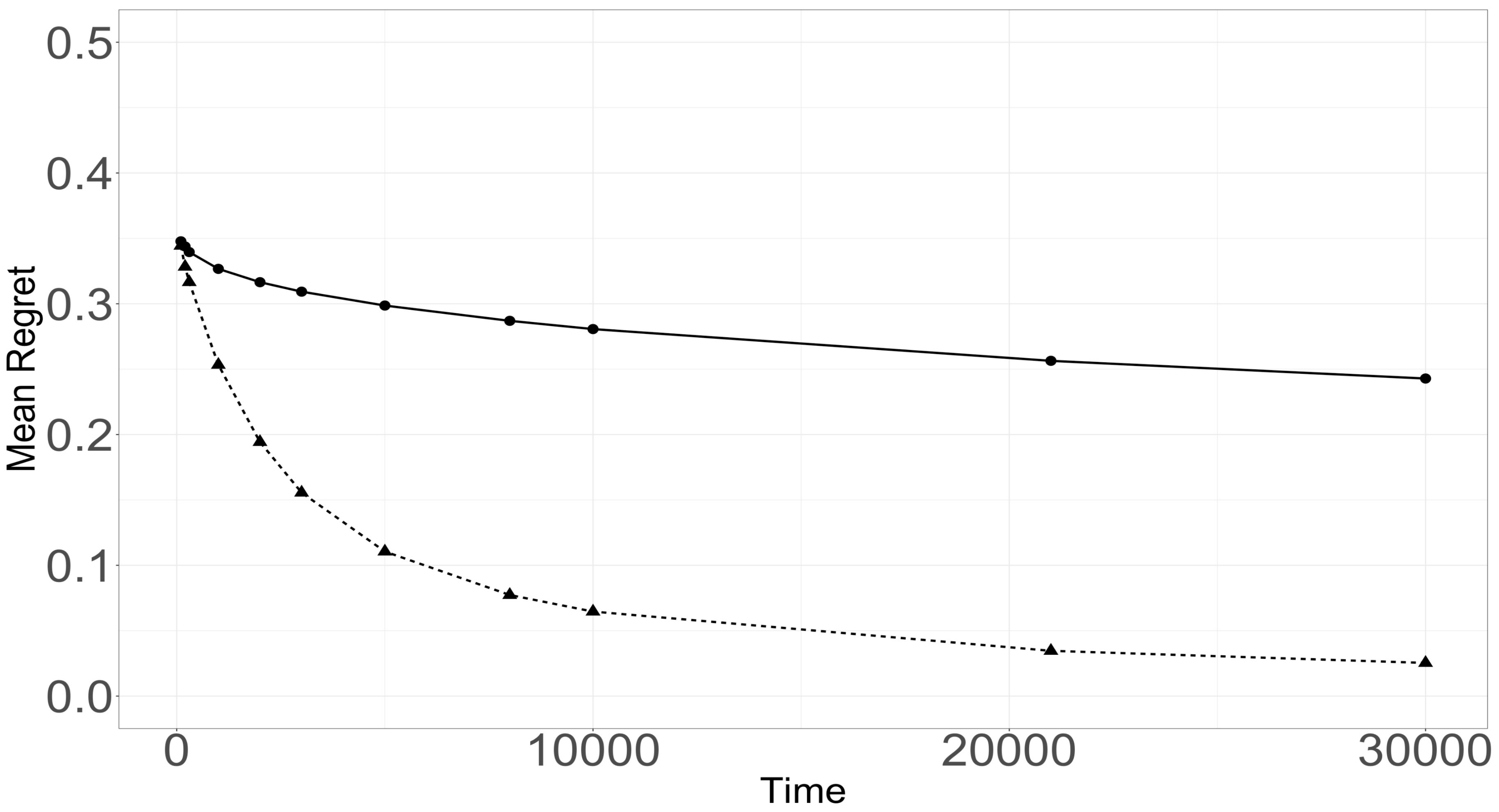}
       \put(300,400){MVLCB}
        \put(300,200){RALCB}
      \end{overpic}
    }
    \caption{Regret performance averaged over 1000 runs for two algorithms under $\widetilde\rho=10^{-3}$. \label{fig:2}}
\end{figure}

From Figure \ref{fig:2}, we can conclude that:
\begin{enumerate}
    \item The RALCB algorithm achieves a lower cumulative regret than the MVLCB algorithm. The cumulative regret of the RALCB algorithm goes flat faster than that of the MVLCB algorithm.
    \item The RALCB algorithm outperforms the MVLCB algorithm in terms of mean regret. The decreasing rate of the mean regret for RALCB is higher than the one for MVLCB.
\end{enumerate}

\subsection{Risk-return Balance Scenario}
In the risk-return balance scenario (i.e., $\widetilde\rho=1$), the optimal arm is $i^{1}_0=11$. We run both algorithms over 1000 runs with a time horizon of $n=30000$. In Figure \ref{fig:3}, we record the pulls of arms at each time point for two algorithms.

\begin{figure}[H]
    \subfloat[Arm pulling record by the MVLCB algorithm]{%
      \begin{overpic}[width=0.48\textwidth]{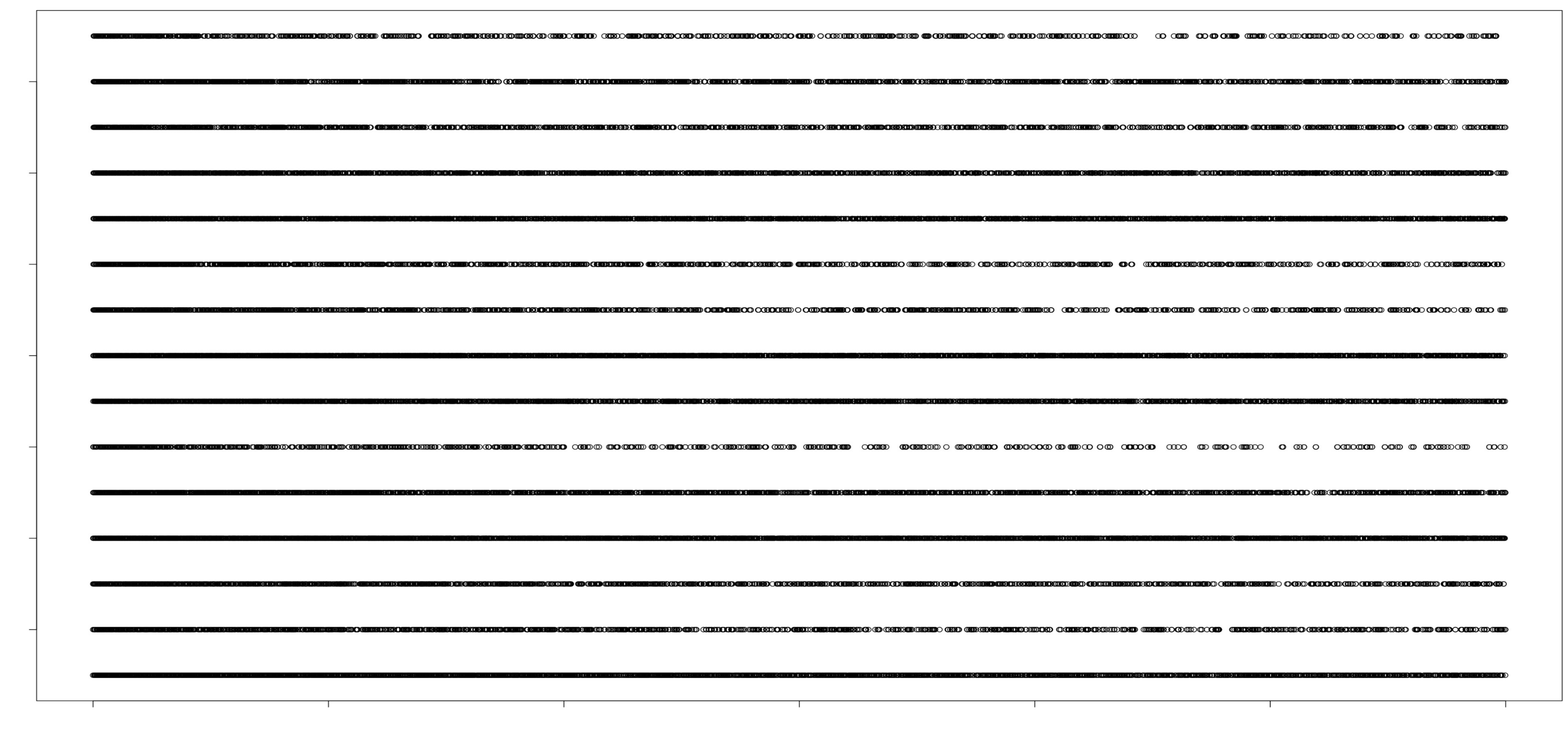}
            \put(-10,60){\scriptsize 2}
        \put(-10,120){\scriptsize 4}
        \put(-10,180){\scriptsize6}
        \put(-10,240){\scriptsize8}
        \put(-20,300){\scriptsize10}
        \put(-20,360){\scriptsize12}
        \put(-20,420){\scriptsize14}
        \put(60,-10){\scriptsize0}
        \put(170,-10){\scriptsize5000}
        \put(300,-10){\scriptsize10000}
        \put(450,-10){\scriptsize15000}
        \put(600,-10){\scriptsize20000}
        \put(750,-10){\scriptsize25000}
        \put(900,-10){\scriptsize30000}
      \end{overpic}
    }\hfill
    \subfloat[Arm pulling record by the RALCB algorithm]{%
      \begin{overpic}[width=0.48\textwidth]{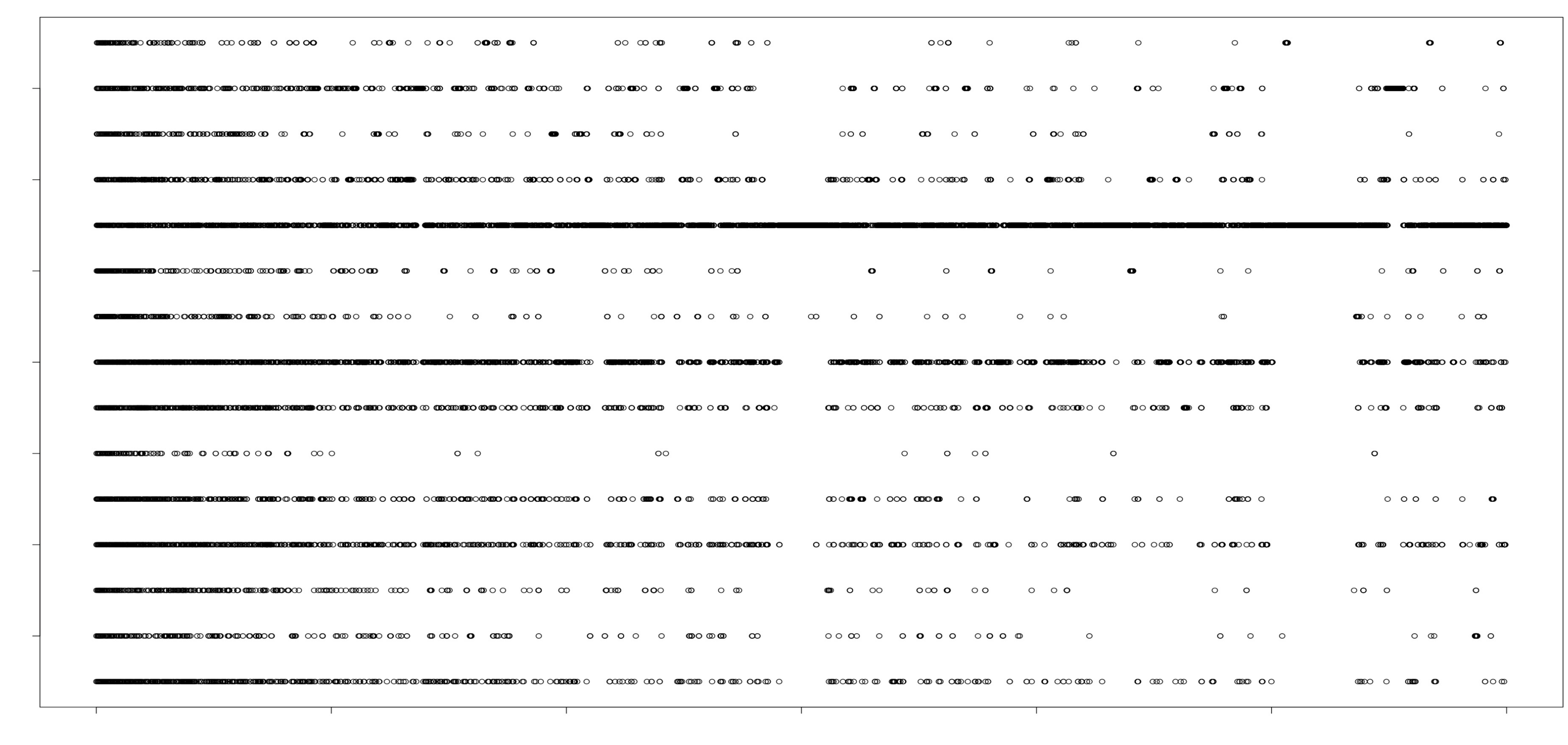}
             \put(-10,60){\scriptsize 2}
        \put(-10,120){\scriptsize 4}
        \put(-10,180){\scriptsize6}
        \put(-10,240){\scriptsize8}
        \put(-20,300){\scriptsize10}
        \put(-20,360){\scriptsize12}
        \put(-20,420){\scriptsize14}
        \put(60,-10){\scriptsize0}
        \put(170,-10){\scriptsize5000}
        \put(300,-10){\scriptsize10000}
        \put(450,-10){\scriptsize15000}
        \put(600,-10){\scriptsize20000}
        \put(750,-10){\scriptsize25000}
        \put(900,-10){\scriptsize30000}

      \end{overpic}
    }
    \caption{Record of arms pulled at each time point in a single simulation  for two algorithms under $\widetilde\rho=1$. \label{fig:3}}
\end{figure}

We can deduce from Figure \ref{fig:3} that:

\begin{enumerate}

\item The optimal arm (i.e., $i^{1}_0=11$) has been pulled more frequently under the RALCB algorithm than under the MVLCB algorithm.

\item In the long run, the RALCB algorithm can find the best arms with a low chance of picking suboptimal arms with the same mean-variance.

\item Under the risk-return balance scenario, the MVLCB algorithm fails to identify the optimal arms.

\end{enumerate}

In Figure \ref{fig:4}, we present the cumulative regret and mean regret, which is averaged over 1000 runs with a time horizon of $n=30000$.
\begin{figure}[H]
    \subfloat[Cumulative regret]{%
      \begin{overpic}[width=0.5\textwidth]{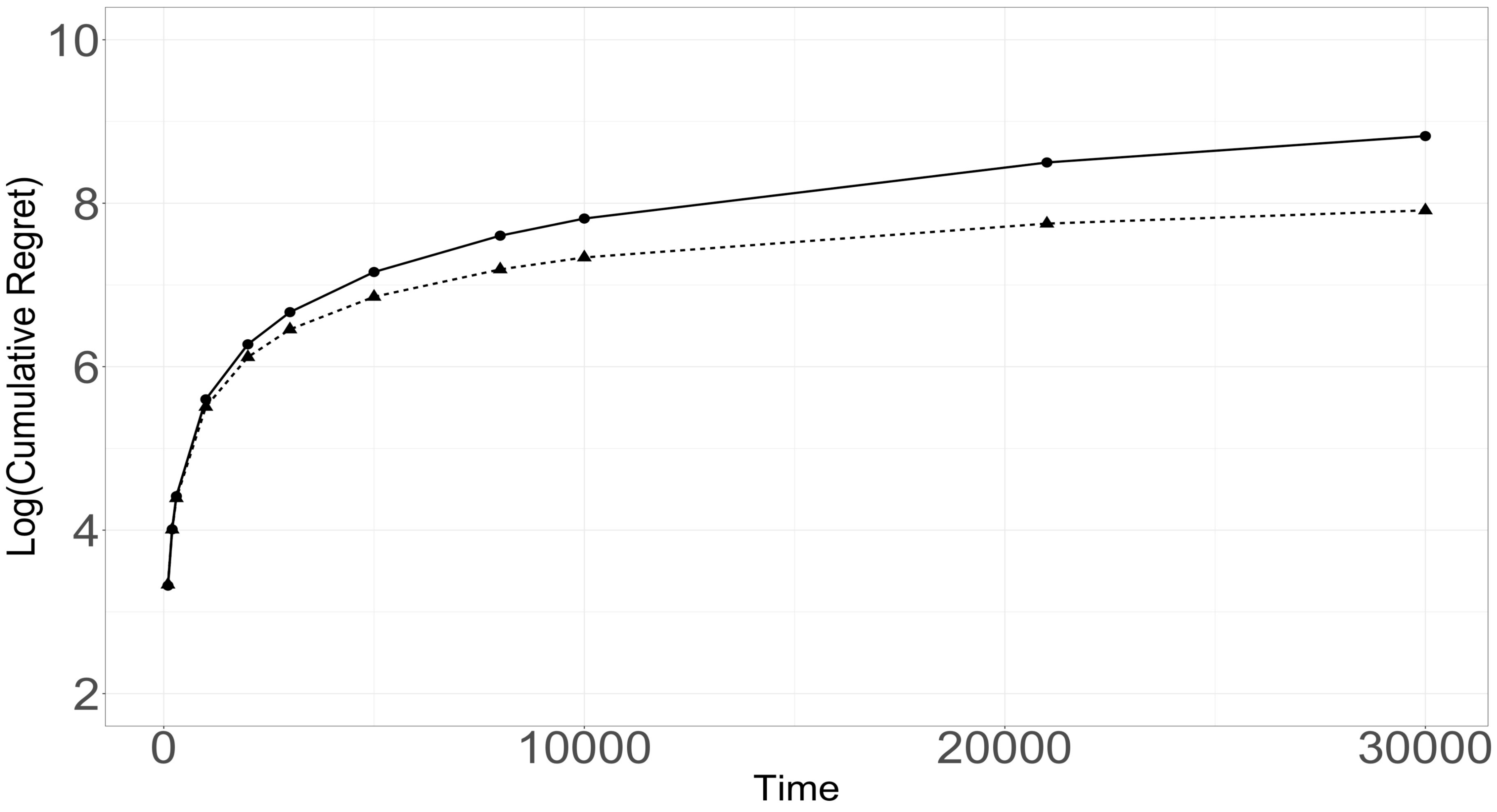}
        \put(300,450){MVLCB}
        \put(300,250){RALCB}
      \end{overpic}
    }\hfill
    \subfloat[Mean regret]{%
      \begin{overpic}[width=0.5\textwidth]{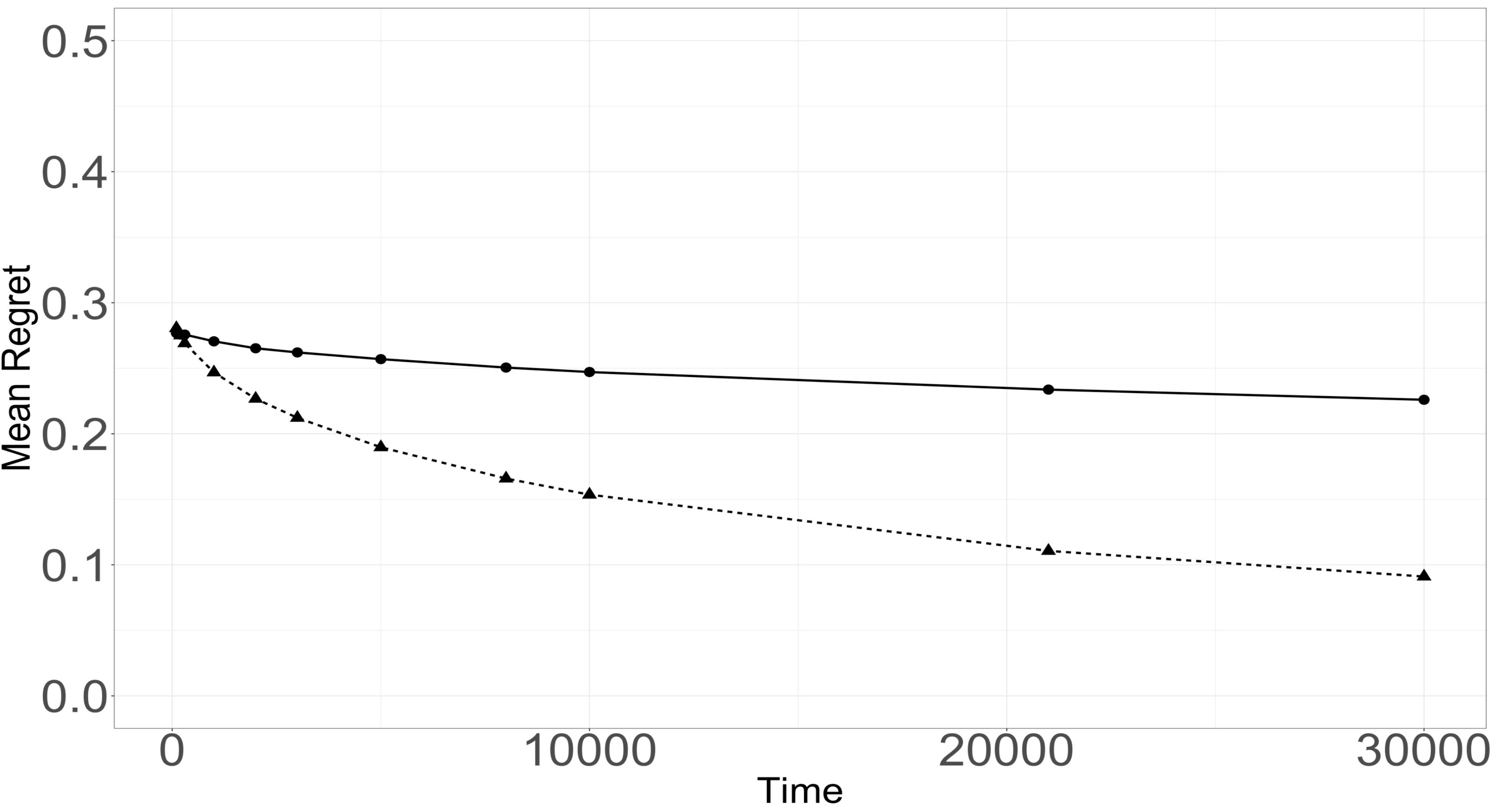}
       \put(300,350){MVLCB}
        \put(300,150){RALCB}
      \end{overpic}
    }
    \caption{Regret performance averaged over 1000 runs for two algorithms under $\widetilde\rho=1$. \label{fig:4}}
\end{figure}

From Figure \ref{fig:4}, we can conclude that:
\begin{enumerate}
    \item The RALCB algorithm achieves a lower cumulative regret than the MVLCB algorithm.
    \item The RALCB algorithm outperforms the MVLCB algorithm in terms of mean regret. The decreasing rate of the mean regret for RALCB is higher than the one for MVCLB.
\end{enumerate}
\subsection{Reward Maximization Scenario}
In the reward maximization scenario (i.e., $\widetilde\rho=10^3$), the optimal arm is $i^{1000}_0=15$. We run both algorithms over 1000 runs with a time horizon of $n=30000$. In Figure \ref{fig:5}, we record the pulls of arms at each time point for two algorithms.

\begin{figure}[H]
    \subfloat[Arm pulling record by the MVLCB algorithm]{%
      \begin{overpic}[width=0.5\textwidth]{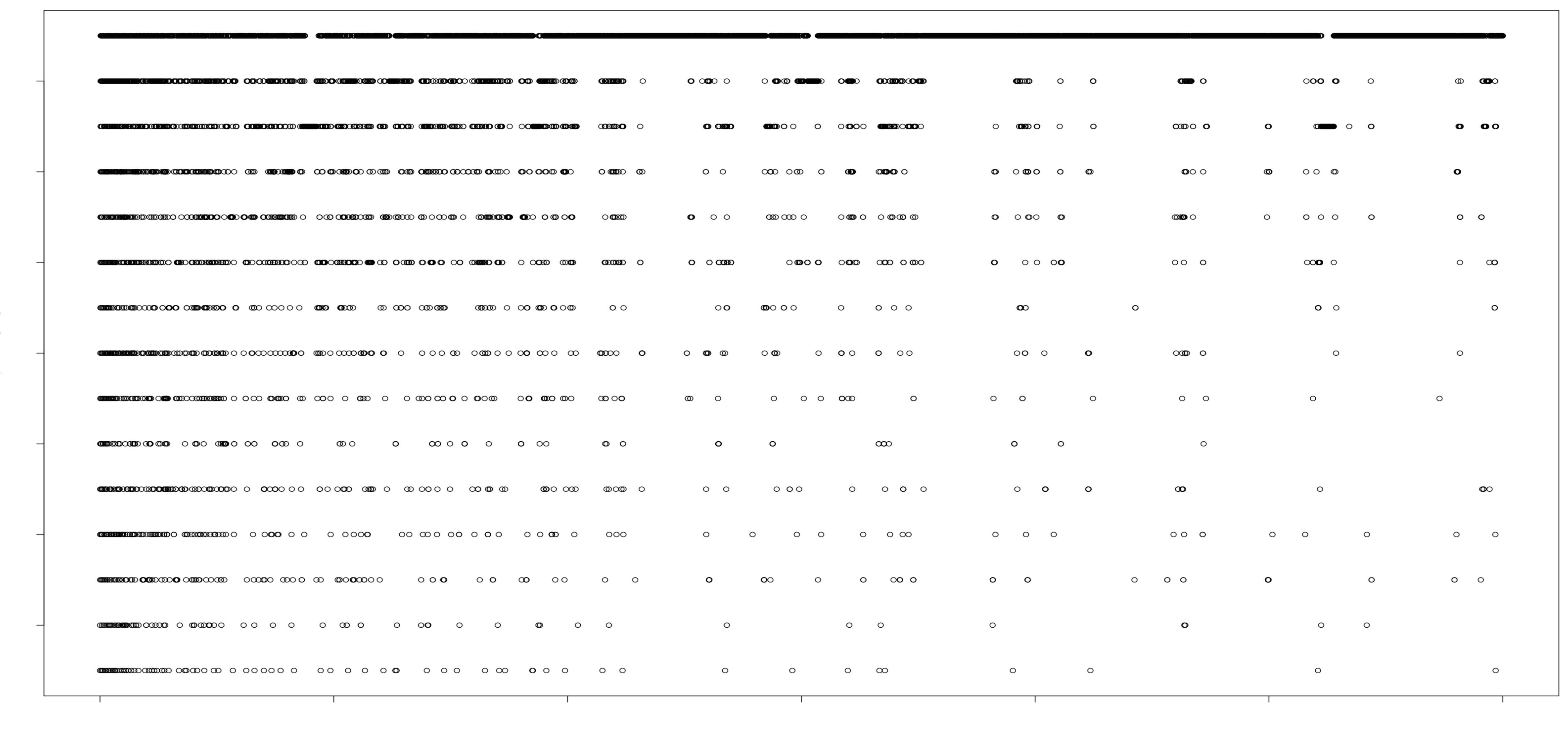}
           \put(-10,60){\scriptsize 2}
        \put(-10,120){\scriptsize 4}
        \put(-10,180){\scriptsize6}
        \put(-10,240){\scriptsize8}
        \put(-20,300){\scriptsize10}
        \put(-20,360){\scriptsize12}
        \put(-20,420){\scriptsize14}
        \put(60,-10){\scriptsize0}
        \put(170,-10){\scriptsize5000}
        \put(300,-10){\scriptsize10000}
        \put(450,-10){\scriptsize15000}
        \put(600,-10){\scriptsize20000}
        \put(750,-10){\scriptsize25000}
        \put(900,-10){\scriptsize30000}
       
      \end{overpic}
    }\hfill
    \subfloat[Arm pulling record by the RALCB algorithm]{%
      \begin{overpic}[width=0.5\textwidth]{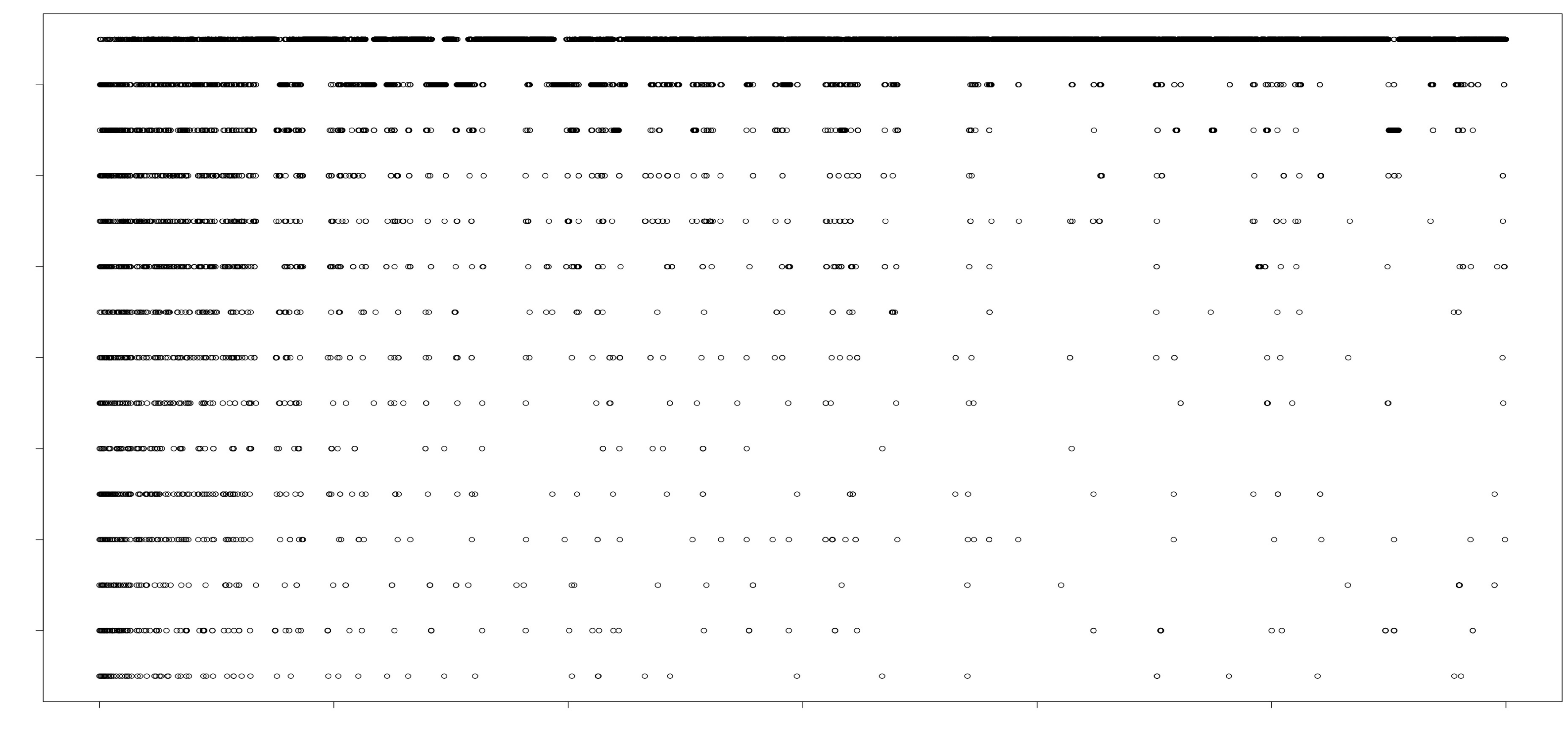}
             \put(-10,60){\scriptsize 2}
        \put(-10,120){\scriptsize 4}
        \put(-10,180){\scriptsize6}
        \put(-10,240){\scriptsize8}
        \put(-20,300){\scriptsize10}
        \put(-20,360){\scriptsize12}
        \put(-20,420){\scriptsize14}
        \put(60,-10){\scriptsize0}
        \put(170,-10){\scriptsize5000}
        \put(300,-10){\scriptsize10000}
        \put(450,-10){\scriptsize15000}
        \put(600,-10){\scriptsize20000}
        \put(750,-10){\scriptsize25000}
        \put(900,-10){\scriptsize30000}
        
      \end{overpic}
    }
    \caption{Record of arms pulled at each time point in a single simulation for two algorithms under $\widetilde\rho=10^3$. \label{fig:5}}
\end{figure}

From Figure \ref{fig:5}, we can conclude that:
\begin{enumerate}
    \item In the long run, both algorithms can identify the optimal arms with a small probability of exploration.
\end{enumerate}
In Figure \ref{fig:6}, we present the cumulative regret and mean regret, which is averaged over 1000 runs with a time horizon of $n=30000$. 
\begin{figure}[H]
    \subfloat[Cumulative regret]{%
      \begin{overpic}[width=0.5\textwidth]{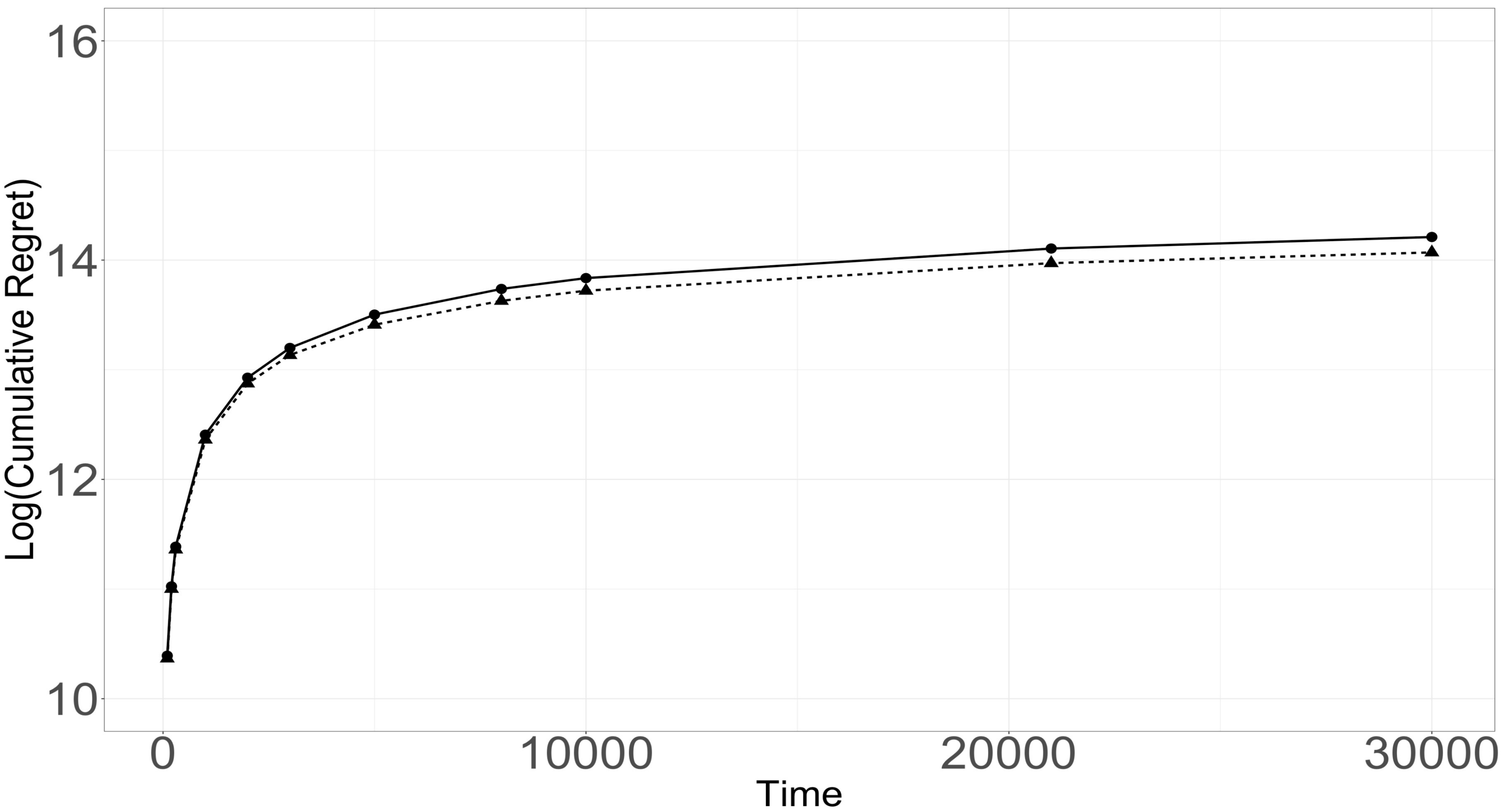}
        \put(300,400){MVLCB}
        \put(300,250){RALCB}
      \end{overpic}
    }\hfill
    \subfloat[Mean regret]{%
      \begin{overpic}[width=0.5\textwidth]{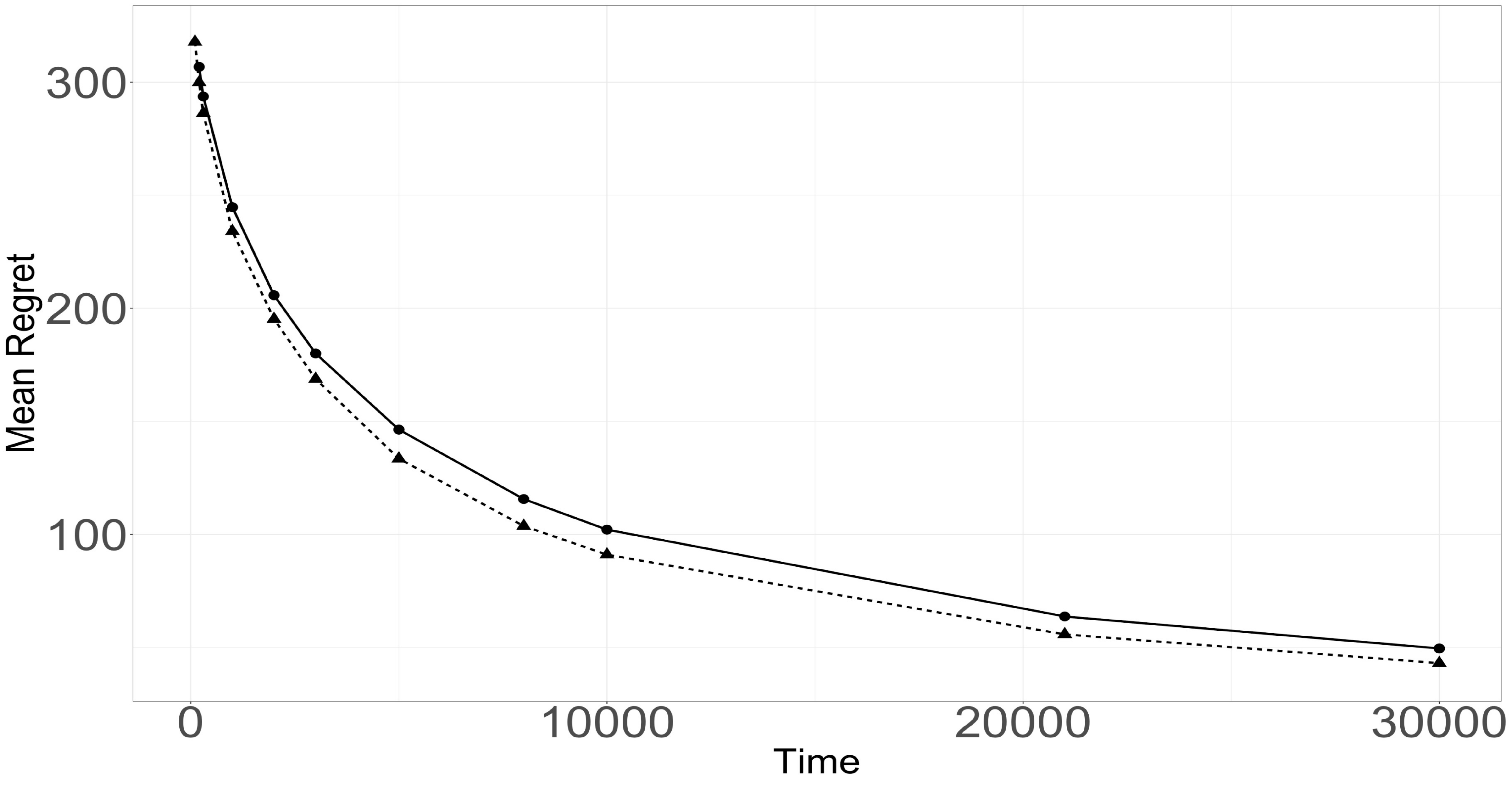}
       \put(300,250){MVLCB}
        \put(300,100){RALCB}
      \end{overpic}
    }
    \caption{Regret performance averaged over 1000 runs for two algorithms under $\widetilde\rho=10^3$. \label{fig:6}}
\end{figure}

From Figure \ref{fig:6}, we can conclude that:
\begin{enumerate}
    \item Two algorithms perform similarly, with a high cumulative regret.
    \item The RALCB algorithm slightly outperforms the MVLCB algorithm in terms of mean regret. The decreasing rate of the mean regret for RALCB is similar to the one for MVCLB.
\end{enumerate}

\subsection{\black{Algorithm Evaluation and Comparison}}
In order to validate our algorithms further and to observe how they perform in terms of $\widetilde\rho$, we ran our algorithms with different choices of $\widetilde\rho$:
$$\widetilde\rho=(0,0.001,0.01,0.1,0.3,1,3,5,7,10,20,50,100,1000,10000).$$
The time horizon $n=10000$ is fixed, and the regret is averaged over 1000 runs. We record the total number of pulls for each arm by time $n$ for two algorithms in Figure \ref{fig:7}.

\begin{figure}[H]
  \centering
  \includegraphics[width=0.5\linewidth]{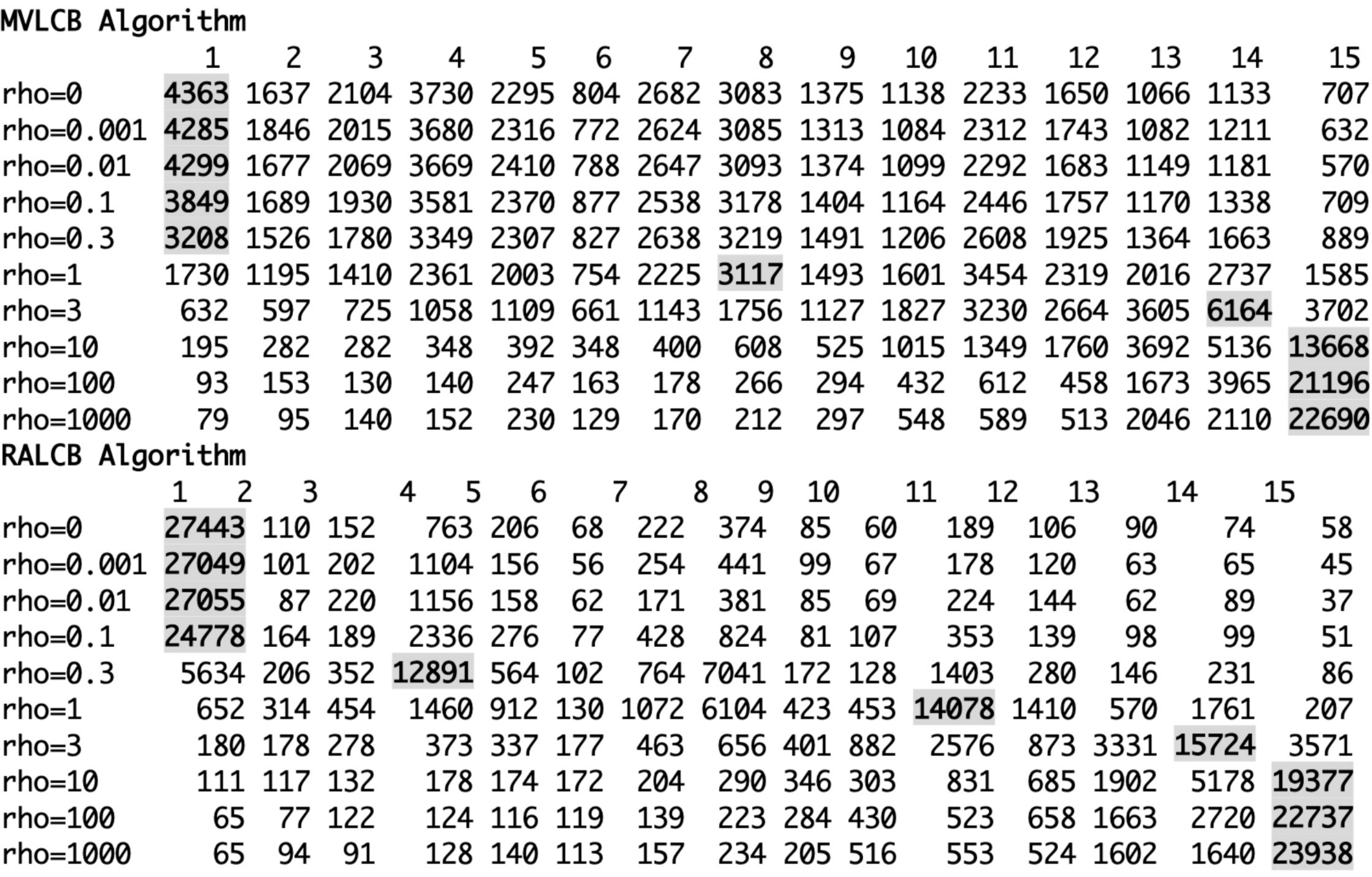}
\caption{Record of total arms pulled in a single simulation by two algorithms under different $\widetilde\rho$. The highlighted ones are the optimal arms under each $\widetilde\rho$.\label{fig:7}}
\end{figure}
From Figure \ref{fig:7}, we can conclude that:
\begin{enumerate}
    \item Under all different $\widetilde\rho$, the RALCB algorithm pulls the optimal arms more frequently than the MVLCB algorithm.
    \item Both algorithms can distinguish the optimal arms from suboptimal arms better when $\widetilde\rho$ is small. 
\end{enumerate}
We report the cumulative regrets in Figure \ref{fig:8}, 
\begin{figure}[H]
\centering
      \begin{overpic}[width=0.6\textwidth]{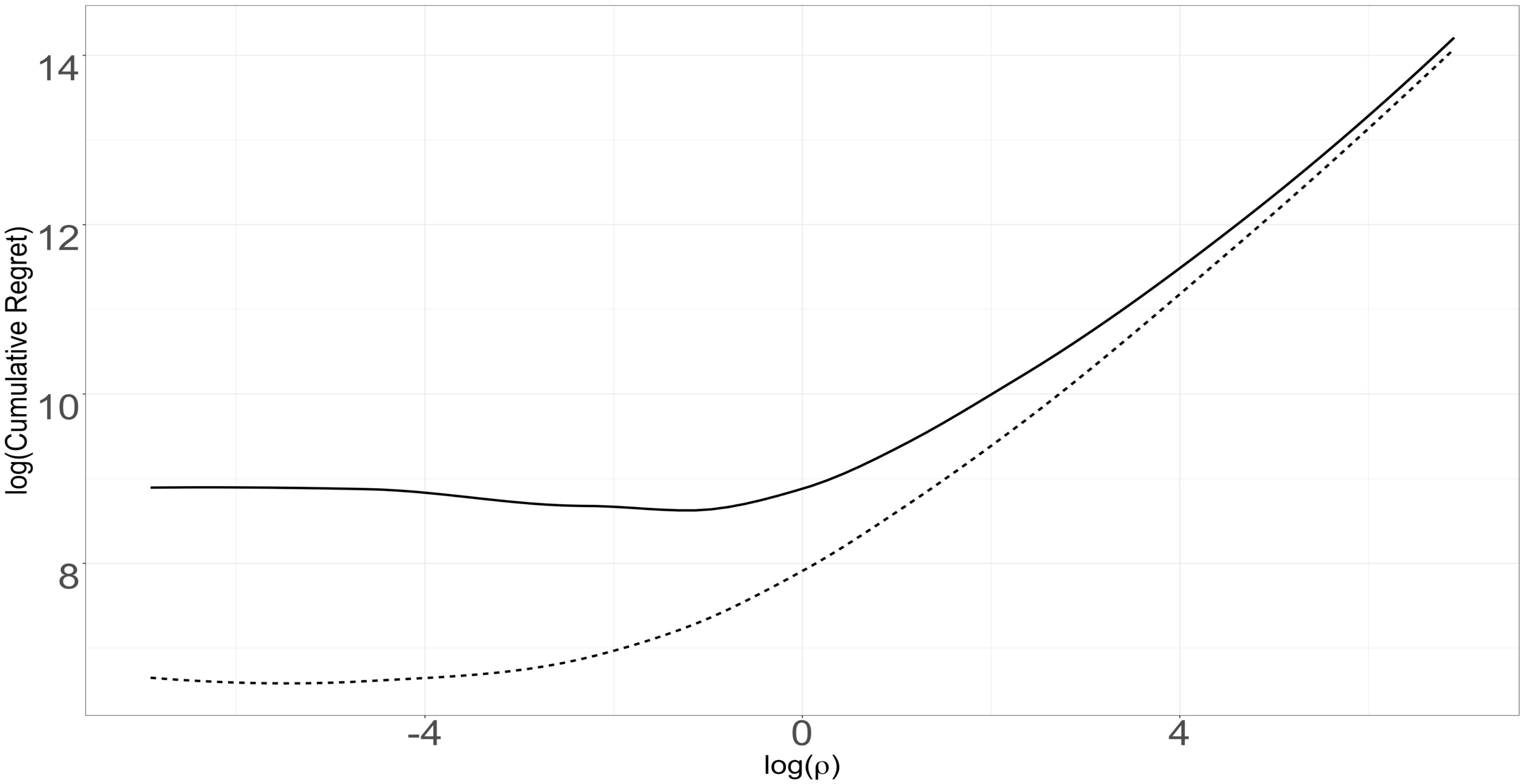}
        \put(400,200){MVLCB}
        \put(600,100){RALCB}
        \put(538,5){\scriptsize{$\widetilde{}$}}
      \end{overpic}
    \caption{Cumulative regret comparison averaged over 1000 runs for two algorithms over different choices of $\widetilde\rho$. \label{fig:8}}
\end{figure}
From Figure \ref{fig:8}, we observe the outperformance of RALCB over MVLCB when $\widetilde\rho$ is small. But, the performance gap narrows as $\widetilde\rho$ increases.

\black{It is evident from Theorem \ref{Thm_2} that when the number of arms, $K$, is large, the regret bound increases. To empirically examine this correlation between average regret and the growing number of arms, we conducted a numerical test. Every arm in the simulation is considered to be an independent Gaussian arm with different parameters. The results, depicted in Figure \ref{fig:arm}, indicate that the average regret will increase with the number of arms $K$ and this observed relationship demonstrates a nearly linear correlation. Such a pattern is consistent in both the risk-neutral case (\cite{auer2002finite}) and the risk-aware case (\cite{sani2012risk}). This trend holds true as the upper bounds of the expected regret are linear in the number of arms.}

\vspace{-10mm}
\begin{figure}[H]
\centering
      \begin{overpic}[width=0.7\textwidth]{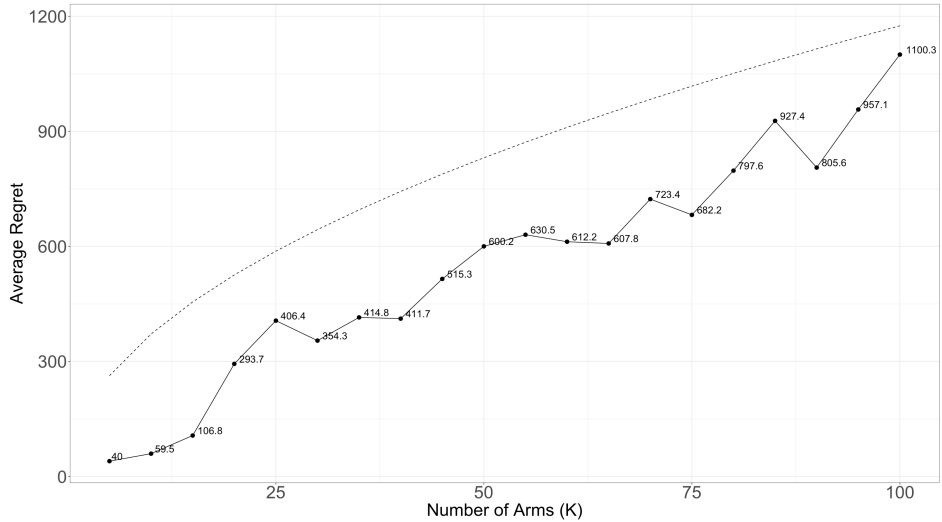}
      \end{overpic}
      \vspace{-10mm}
    \caption{\black{Average regret comparison averaged over 1000 runs for RALCB algorithms over different choices of $K$. In the graph, the solid line signifies the average regret derived from the simulation, while the dashed line represents the corresponding theoretical regret for different values of $K$.} \label{fig:arm}}
\end{figure}

\begin{remark}
    \black{Section \ref{section5} illustrates that our algorithm can be used to analyze the dependent case. However, analyzing correlations among arms is impossible in our current setting, where only one arm is pulled in each round. There is no way to exploit the correlation among arms without altering the underlying model setup (i.e. additional information regarding the prior structure and multiple arms are pulled simultaneously).}
\end{remark}
\black{Under the dependent scenario, we compare the performance of our algorithm over different choices of correlation coefficients (i.e., we use $\tau_{i,j}$ to denote the correlation coefficients between arm $i$ and arm $j$). As a starting point, we assume each pair of arms share the same correlation coefficients (i.e., $\tau_{i,j}=\tau$, $\forall i,j$). We are working on analyzing the performance changes as the correlations increase, where
$$\tau=(-0.5,-0.2,0,0.2,0.5,1).$$
Figure \ref{fig:10} illustrates the influence of correlation $\tau$ on the performance of the RALCB algorithm under different risk aversion $\widetilde\rho$ parameters.}

\begin{figure}[H]
\begin{subfigure}{.48\textwidth}
  \centering
  \includegraphics[width=1\linewidth]{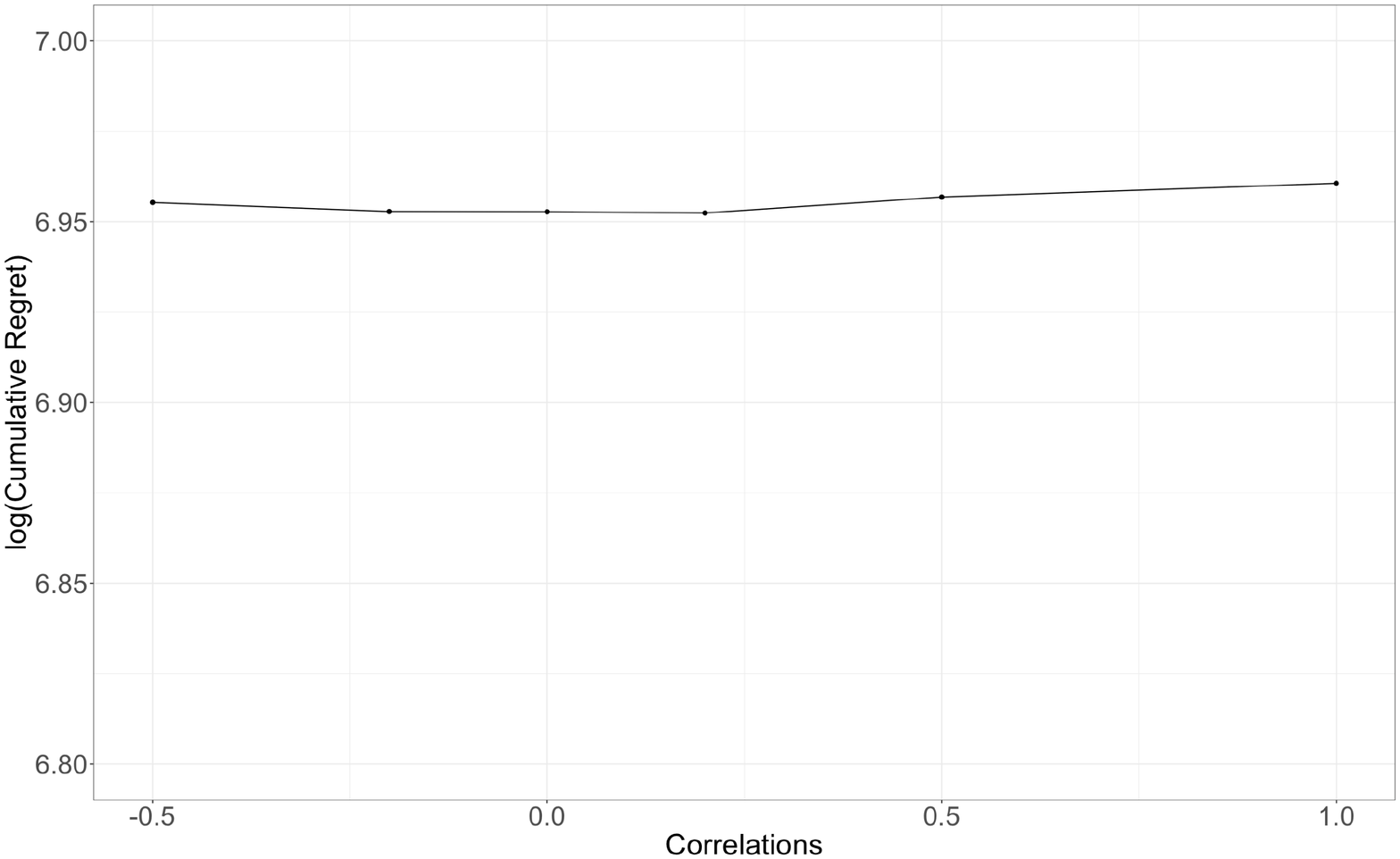}
  \caption{Cumulative regret over different $\tau$}
\end{subfigure}%
\begin{subfigure}{.48\textwidth}
  \centering
  \includegraphics[width=1\linewidth]{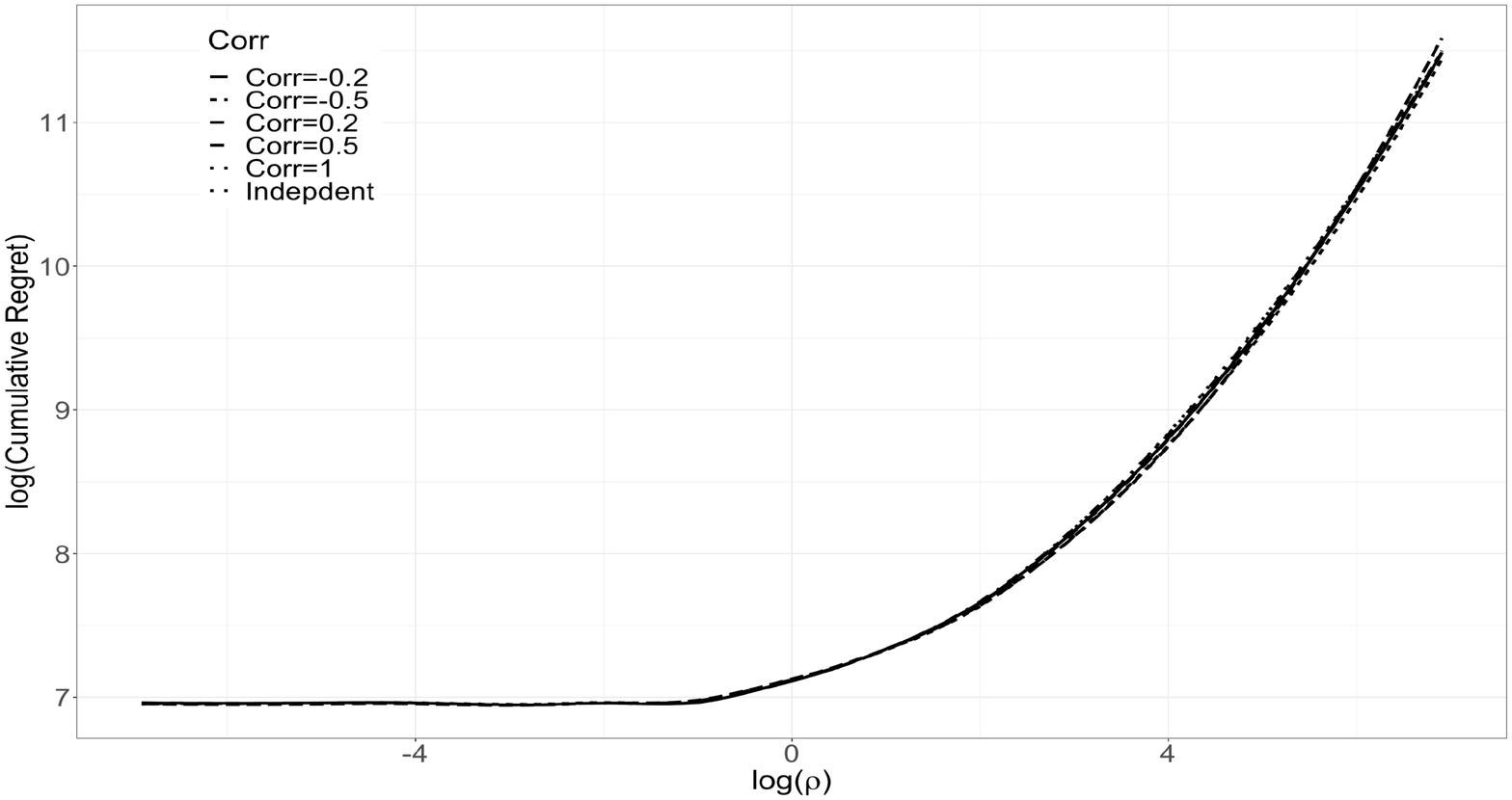}
  \caption{Cumulative regret comparison over different $\widetilde\rho$}
\end{subfigure}
\caption{\black{Cumulative regret comparison over different choices of $\tau$ and $\widetilde\rho$.}\label{fig:10}}
\end{figure}

 \black{Based on Figure \ref{fig:10}, it is evident that our algorithm consistently yields identical cumulative regret across various dependent scenarios. Without further information on the underlying structure or modifying the model setup, the examination of correlations among arms is unfeasible using our algorithm. The extension to examine correlated arms is left to future work.}\par
 
The numerical simulations discussed so far where all based on synthetic data sampled from standard classes of distributions. In the sequel, our goal is to illustrate the robustness of our algorithm by applying it to real-world data  with unknown distributions. To this end, we choose financial data due  to its wide availability. We emphasize, however, that financial data is publicly available, so that portfolio construction is typically not an application area for MAB algorithms, which inherently assume that the reward is only revealed for that arm that is drawn while the rewards of all other arms remain hidden. So the purpose of the following numerical experiments is solely the illustration of the robustness of our algorithm. We design experiments and evaluate the performance of our proposed algorithm (i.e., the RALCB algorithm) in portfolio selection to the performance of UCB algorithm, the $\varepsilon$-greedy algorithm and equally weighted (EW) strategy. In the experiment, we implement actual stock market data to test the performance of the above four algorithms. The dataset we use is S\&P500, which is the most frequent trading stock market data. We select 30 stocks from the S\&P500 stock pool based on size, age, and earning per share: AAPL, MSFT, UNH, JNJ, XOM, PNR, AIG, VNO, DISH, NWL, BK, STT, CL, HIG, ED, CMA, ALL, AMAT, BSX, MU, HAL, RCL, CCL, APA, HRL, AEE, FAST, TRMB, NWL, AFL. In our dataset, we have weekly observations of all 30 ETFs over the time period $1996/02/26$ to $2021/02/26$. 

Figure \ref{fig6sim} presents the change in portfolio wealth by following different algorithms over time. We notice that when the market turns down during the middle of the timeline, wealth for MAB-based algorithms drops dramatically. The main reason is that MAB-based algorithms must spend time learning new parameters because the underlying parameters estimated from the history are no longer valid. Overall, the RALCB algorithm achieves the best long-term performance and the highest wealth at the end.

\begin{figure}[H]
    \centering
    \includegraphics[width=0.9\textwidth]{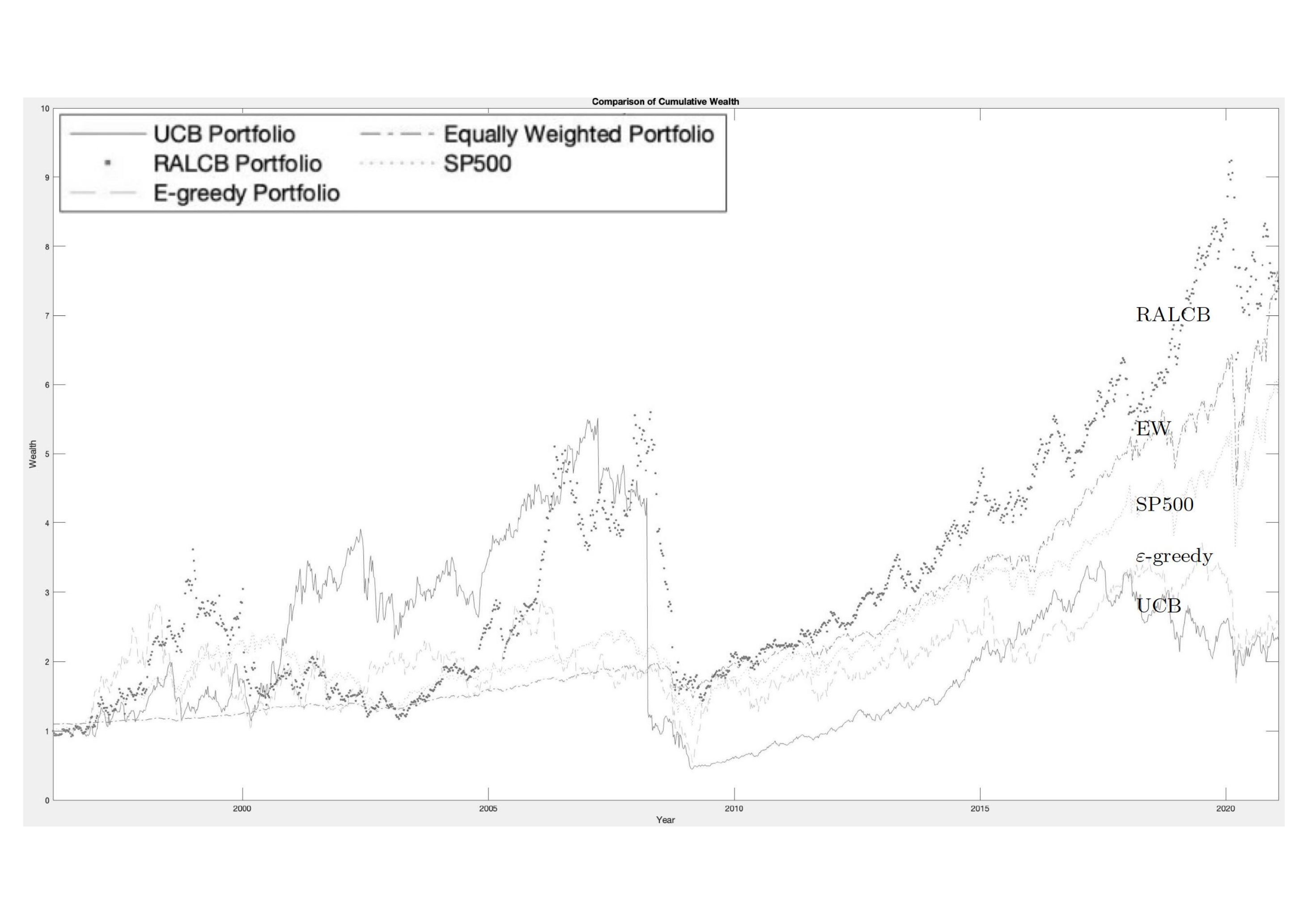}
    \vspace{-10mm}
    \caption{\black{The curves of cumulative wealth across the investment periods (1996-2021) for different algorithms.}\label{fig6sim}}
\end{figure}

To compare the performance of four investment algorithms, we use the standard criteria in finance proposed by \citet{zhu2019adaptive}: Cumulative Wealth (CW), Volatility (VO), Sharpe Ratio (SR) and Maximum Drawdown (MDD).\par

Table \ref{table11} summarizes portfolio performance as measured by the SR, CW, and VO for all benchmarks tested, with the bold values indicating the winners. The risk-free (Rf) rate, which is based on the historical average of the 10-Year Treasury Rate, is assumed to be 4.38\% yearly for generating the SR. Among all algorithms, RALCB algorithm is the one with the highest cumulative wealth. However, EW does a better job of risk-return balancing than RALCB. The main reason is that the RALCB algorithm only permits the investment of one stock per period, which naturally increases the portfolio's volatility.

\begin{table}[H]
    \centering
    \begin{tabular}{|c|c|c|c|c|}
  \hline
    1996-2021&UCB&RALCB&\black{$\varepsilon$-greedy}&EW\\ \hline
    CW&2.3579&\textbf{7.4988}&2.4741&7.3485\\ \hline
VO&	0.3777&0.2926&0.3151&\textbf{0.1921}\\ \hline
SR (Rf=4.38\%)&-0.0784&0.1480&-0.0490&\textbf{0.7330}\\ \hline
MDD&	0.9191&0.7432&0.3648&\textbf{0.2978}\\ \hline
\end{tabular}
    \caption{Performance of four different algorithms on real data. RALCB outperforms the other algorithms in terms of wealth accumulation. EW does a better job of risk-return balancing than RALCB. \label{table11}}
\end{table}

In our next application, the arms consist of the following S\&P 500 sector ETFs: GDX, IBB, ITB, IYE, IYR, KBE, KRE, OIH, SMH, VNQ, XHB, XLB, XLF, XLI, XLK, XLP, XLU, XLV, XLY, XME, XOP, XRT. In our dataset, we have daily observations of all 22 ETFs over the time period $2006/06/22$ to $2022/11/18$.

Figure \ref{etf} presents the change in portfolio wealth by following different algorithms over time. We notice that, by investing in ETFs, the RALCB algorithm achieves better long-term performance than the other algorithms.
\begin{figure}[H]
    \centering
    \includegraphics[width=0.9\textwidth]{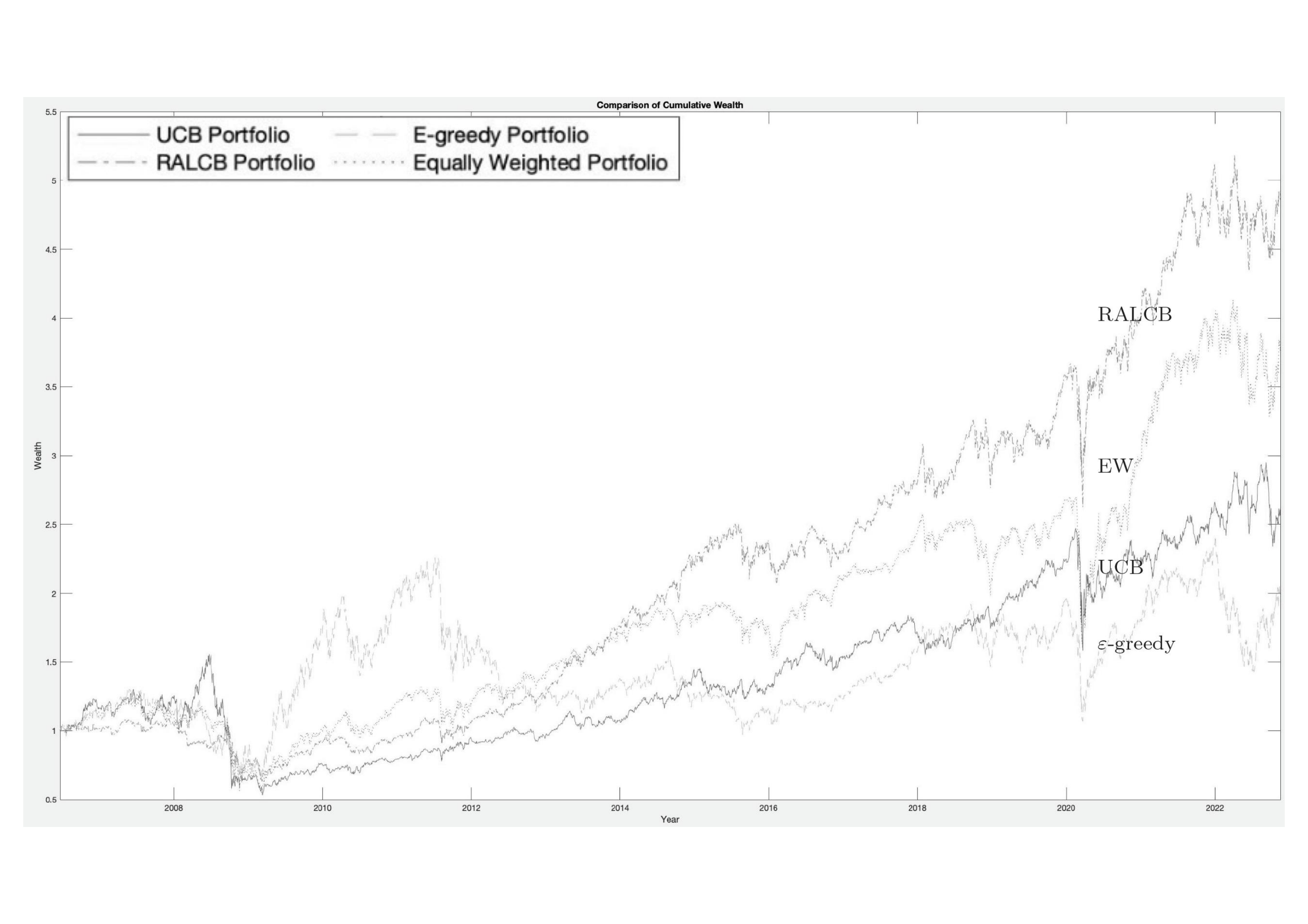}
    \vspace{-10mm}
    \caption{\black{The curves of cumulative wealth across the investment periods (2006-2022) for different algorithms.} \label{etf}}
\end{figure}

Again,  we use the standard criteria in finance proposed by \citet{zhu2019adaptive} to compare the performance of four investment algorithms:

\begin{table}[H]
    \centering
    \begin{tabular}{|c|c|c|c|c|}
  \hline
    2006-2022&UCB&RALCB&\black{$\varepsilon$-greedy}&EW\\ \hline
    CW&2.5629&\textbf{4.8620}&2.1932&3.7621\\ \hline
VO&	0.1004&\textbf{0.0824}&0.1444&0.1056\\ \hline
SR (Rf=4.38\%)&-0.2771&\textbf{-0.1396}&-0.2205&-0.2190\\ \hline
MDD&	0.6579&\textbf{0.4213}&0.6588&0.5616\\ \hline
\end{tabular}
    \caption{Performance of four different algorithms on ETFs investment. RALCB outperforms the other algorithms in terms of all financial criteria. \label{table22}}
\end{table}

Table \ref{table22} summarizes portfolio performance as measured by the SR, CW, and VO for all benchmarks tested, with the bold values indicating the winners. The risk-free (Rf) rate is again assumed to be 4.38\% yearly.


\section{Proofs}\label{section7}

This section consists of two parts. First, we derive the high-probability confidence bounds on the empirical mean-variance through an application of the Chernoff–Hoeffding inequality. In the second part, we report a complete analysis of the learning regret of the RALCB algorithm. Unless otherwise stated, all identities and inequalities between random variables are stated in $\mathbb{P}$-a.s. in the following proofs.

\subsection{Concentration Analysis}\label{appc}

To derive a high-probability bound on the expected pseudo regret, we will build high-probability confidence bounds on the empirical mean and variance through an application of the Chernoff–Hoeffding inequality. Recall that we assume that the rewards of each arm belong to the class of sub-Gaussian random variables. Therefore, we analyze the concentration of empirical mean and variance via sub-Gaussian and sub-exponential distributions. In this section, we let $\left\{X_{t}\right\}_{t\geq0}$ be i.i.d.~sub-Gaussian random variables with mean $\mu$, variance $\sigma^{2}$, and sub-Gaussianity parameter $\theta$.
 We start by providing a concentration bound for the empirical mean $\hat{\mu}_n:=\frac{1}{n} \sum_{t=1}^{n} X_{t}$. 

\begin{lemma}\label{Lem_7}
For every $n>0$ and $\delta \in (0,1)$, with probability at least $1-\delta$,
\begin{equation}
    |\hat{\mu}_n-\mu| \leq \theta \sqrt{\frac{2 \log (2/ \delta)}{n}}. \label{Eq_22}
\end{equation}
\end{lemma}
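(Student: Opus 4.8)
The plan is to derive the concentration bound for the empirical mean $\hat\mu_n$ directly from the sub-Gaussian assumption via a standard Chernoff bound argument, followed by a union bound over the two tails. First I would observe that since $X_1,\dots,X_n$ are i.i.d.\ with common sub-Gaussianity parameter $\theta$, the centered sum $S_n:=\sum_{t=1}^n (X_t-\mu)$ satisfies, for every $\lambda\in\mathbb{R}$,
\begin{equation*}
\mathbb{E}_{\mathbb{P}}\bigl[e^{\lambda S_n}\bigr]=\prod_{t=1}^n \mathbb{E}_{\mathbb{P}}\bigl[e^{\lambda(X_t-\mu)}\bigr]\leq \prod_{t=1}^n e^{\lambda^2\theta^2/2}=e^{n\lambda^2\theta^2/2},
\end{equation*}
so that $S_n$ is itself sub-Gaussian with parameter $\sqrt{n}\,\theta$, and hence $\hat\mu_n-\mu = S_n/n$ is sub-Gaussian with parameter $\theta/\sqrt{n}$.

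Next I would apply the Chernoff bound to the upper tail: for any $a>0$ and $\lambda>0$, Markov's inequality gives $\mathbb{P}(\hat\mu_n-\mu\geq a)\leq e^{-\lambda a}\,\mathbb{E}_{\mathbb{P}}[e^{\lambda(\hat\mu_n-\mu)}]\leq e^{-\lambda a + \lambda^2\theta^2/(2n)}$. Optimizing over $\lambda>0$ yields $\lambda = na/\theta^2$ and the bound $\mathbb{P}(\hat\mu_n-\mu\geq a)\leq e^{-na^2/(2\theta^2)}$. The identical estimate applied to $-(X_t-\mu)$ (which is sub-Gaussian with the same parameter, since the defining inequality is symmetric under $\lambda\mapsto-\lambda$) gives $\mathbb{P}(\hat\mu_n-\mu\leq -a)\leq e^{-na^2/(2\theta^2)}$. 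A union bound then yields $\mathbb{P}(|\hat\mu_n-\mu|\geq a)\leq 2e^{-na^2/(2\theta^2)}$.

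Finally I would set the right-hand side equal to $\delta$ and solve for $a$: from $2e^{-na^2/(2\theta^2)}=\delta$ we get $a=\theta\sqrt{2\log(2/\delta)/n}$, which is exactly the claimed bound. Taking complements, with probability at least $1-\delta$ we have $|\hat\mu_n-\mu|\leq \theta\sqrt{2\log(2/\delta)/n}$, completing the proof. There is no real obstacle here — this is the textbook sub-Gaussian maximal inequality (see \citet{wainwright2019high}); the only points requiring a word of care are the tensorization step using i.i.d.\ structure and noting that the sub-Gaussian property is preserved under negation so that the two-sided bound follows from two applications of the one-sided estimate.
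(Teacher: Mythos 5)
Your proof is correct and follows essentially the same route as the paper: a Chernoff--Markov bound on each tail of the centered sum using the sub-Gaussian moment generating function, a union bound to combine the two tails into $\mathbb{P}(|\hat\mu_n-\mu|\geq a)\leq 2e^{-na^2/(2\theta^2)}$, and then solving $\delta=2e^{-na^2/(2\theta^2)}$ for $a$. Your write-up is in fact slightly more careful than the paper's, which omits the factor of $n$ in the intermediate exponent $e^{\frac12\lambda^2\theta^2-\lambda na}$ (a typo that does not affect the final bound).
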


\begin{proof} 
According to Markov's inequality and \eqref{sub-Gaussian ieq}, we have for  $a\ge0$ and all $\lambda>0$,
\begin{align*}
\mathbb{P}[|\hat\mu_n-\mu|\ge a]\le  \mathbb{P}\Big[e^{-\lambda\sum_{t=1}^n(X_t-\mu)}\ge e^{\lambda na}\Big]+\mathbb{P}\Big[e^{\lambda\sum_{t=1}^n(X_t-\mu)}\ge e^{\lambda na}\Big]\le 2e^{\frac12\lambda^2\theta^2-\lambda na}\le 2 e^{-na^2/(2\theta^2)}.
\end{align*}
The result thus follows by taking $\delta=2e^{-na^2/(2\theta^2)}$.
\end{proof}

Now we derive a concentration inequality for the sample variance $\hat{\sigma}_n^2:=\frac{1}{n} \sum_{t=1}^{n} (X_{t}-\hat{\mu}_n)^2$. Its proof uses the notion of sub-exponential random variables. Recall that a random variable $Z$ is called \emph{sub-exponential with parameter $\gamma>0$} if $\mathbb{E}[Z]=0$ and $\mathbb{E}[e^{s Z}]\le e^{s^2\gamma^2/2}$ for all $s\in\mathbb{R}$ with $|s|\le1/\gamma$; see, e.g., \citet{wainwright2019high} and \citet{rigollet2017high}.

\begin{lemma}\label{Lem_8}
For every $n>0$ and $\delta \in (0,1)$, we have that with probability at least $1-\delta$,
\begin{equation}
    \left| \hat{\sigma}_n^2-\sigma^2+(\hat{\mu}_n-\mu)^2\right| \leq 32\theta^2 \max \left(\sqrt{\frac{\log (2/ \delta)}{n}},\frac{2 \log (2/ \delta)}{n}\right). \label{Eq_24}
\end{equation}
\end{lemma}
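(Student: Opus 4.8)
The plan is to write $\hat\sigma_n^2 - \sigma^2 + (\hat\mu_n - \mu)^2$ as an average of i.i.d.\ centered random variables and then apply a Bernstein-type tail bound for sums of sub-exponential variables. First I would expand
$$\hat\sigma_n^2 = \frac1n\sum_{t=1}^n (X_t - \hat\mu_n)^2 = \frac1n\sum_{t=1}^n (X_t - \mu)^2 - (\hat\mu_n - \mu)^2,$$
which is the usual variance decomposition. Consequently $\hat\sigma_n^2 - \sigma^2 + (\hat\mu_n-\mu)^2 = \frac1n\sum_{t=1}^n \big[(X_t-\mu)^2 - \sigma^2\big]$, so the quantity to be controlled is exactly the deviation of the empirical mean of the i.i.d.\ centered variables $Z_t := (X_t-\mu)^2 - \sigma^2$. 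This reduction is clean and removes the $\hat\mu_n$ dependence entirely.

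Next I would establish that each $Z_t$ is sub-exponential with a parameter proportional to $\theta^2$. Since $X_t$ is sub-Gaussian with parameter $\theta$, the centered variable $(X_t-\mu)^2$ has a squared-sub-Gaussian (i.e.\ sub-exponential) tail; a standard computation — bounding $\mathbb{E}[e^{s((X_t-\mu)^2-\sigma^2)}]$ via the moment generating function or via the moment bounds $\mathbb{E}[|X_t-\mu|^{2k}] \le (2\theta^2)^k k!$ that follow from sub-Gaussianity — yields that $Z_t$ is sub-exponential with parameter $\gamma = c\,\theta^2$ for an explicit absolute constant $c$ (the constant $16$, doubling to the $32$ in the statement, comes out of tracking this bound carefully). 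I would cite \citet{wainwright2019high} or \citet{rigollet2017high} for the underlying sub-exponential moment estimates rather than rederiving them.

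Then I would invoke the Bernstein / sub-exponential concentration inequality: for i.i.d.\ sub-exponential $Z_t$ with parameter $\gamma$, for all $a > 0$,
$$\mathbb{P}\Big[\Big|\frac1n\sum_{t=1}^n Z_t\Big| \ge a\Big] \le 2\exp\left(-\frac n2 \min\left(\frac{a^2}{\gamma^2}, \frac a\gamma\right)\right).$$
Setting the right-hand side equal to $\delta$ and solving for $a$ produces the two-regime bound $a = \gamma\max\big(\sqrt{2\log(2/\delta)/n},\, 2\log(2/\delta)/n\big)$, and plugging in $\gamma = 16\theta^2$ (so that $2\gamma = 32\theta^2$) gives precisely \eqref{Eq_24}. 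The main obstacle is purely bookkeeping: pinning down the sub-exponential parameter of $Z_t$ with the specific numerical constant so that the final bound matches the stated form $32\theta^2\max(\cdots)$ — the structure of the argument (decompose, show sub-exponential, apply Bernstein) is routine, but getting the constants to line up with the clean statement requires care in which version of the sub-exponential definition and tail bound one uses.
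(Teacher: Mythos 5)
Your proposal follows essentially the same route as the paper's proof: the variance decomposition $\hat\sigma_n^2-\sigma^2+(\hat\mu_n-\mu)^2=\frac1n\sum_t\big[(X_t-\mu)^2-\sigma^2\big]$, the fact (Lemma 1.12 in \citet{rigollet2017high}) that the centered square of a $\theta$-sub-Gaussian variable is sub-exponential with parameter $16\theta^2$, and inversion of the two-regime Bernstein bound (Proposition 2.9 in \citet{wainwright2019high}). The only point to watch is the bookkeeping you already flag: the paper's tail bound is $2\exp\left(-\frac n2\min\left(\frac{a^2}{2\gamma^2},\frac a{2\gamma}\right)\right)$ with $\gamma=16\theta^2$, whose inversion gives exactly $32\theta^2\max\left(\sqrt{\log(2/\delta)/n},\,2\log(2/\delta)/n\right)$, whereas the normalization you quote yields a marginally tighter constant that still implies the stated inequality.
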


\begin{proof}
We may assume without loss of generality that $\mu=0$. Lemma 1.12 in \citet{rigollet2017high} states that $Z_t:=X_t^2$ is sub-exponential with parameter $16 \theta^{2}$. An application of Proposition 2.9 in  \citet{wainwright2019high} hence yields that 
\begin{equation}
  \mathbb{P}\bigg(\bigg|\frac{1}{n}\sum_{i=1}^{n}(X_i)^2-\sigma^2\bigg|>a\bigg) \leq 2 \exp \left(-\frac{n}{2}g(a)\right), \label{Eq_25}
\end{equation}
where $g(a):=\min(\frac{a^2}{512\theta^4},\frac{a}{32\theta^2})$. Moreover, 
$$
\begin{aligned}
\left|\frac{1}{n}\sum_{i=1}^{n}(X_i)^2-\sigma^2\right|=\left|\hat{\sigma}_n^2-\sigma^2+\hat{\mu}_n^2\right|.
\end{aligned}
$$
Therefore, we have,
\begin{equation}
    \mathbb{P}\left(\left|\hat{\sigma}_n^2-\sigma^2+\hat{\mu}_n^2\right|>a\right) \leq 2 \exp \left(-\frac{n}{2}g(a)\right). \label{Eq_26}
\end{equation}
By setting $\delta:=2 \exp \left(-\frac{n}{2}g(a)\right)$ and writing $a$ in terms of $\delta$ on the left side of the inequality \eqref{Eq_26}, we obtain:

\begin{equation}
     \mathbb{P}\left(\left|\hat{\sigma}_n^2-\sigma^2+\hat{\mu}_n^2\right|>32\theta^2 \max \left(\sqrt{\frac{\log (2/ \delta)}{n}},\frac{2 \log (2/ \delta)}{n}\right)\right) \leq \delta. \label{Eq_new1}
\end{equation}
Alternatively, with a probability of at least $1-\delta$,
$$ \left| \hat{\sigma}_n^2-\sigma^2+(\hat{\mu}_n-\mu)^2\right| \leq 32\theta^2 \max \left(\sqrt{\frac{\log (2/ \delta)}{n}},\frac{2 \log (2/ \delta)}{n}\right).$$
\end{proof}

In our analysis, we assume the rewards of each arm belong to the class of sub-Gaussian random variables with different parameters. With Lemma \ref{Lem_7} and Lemma \ref{Lem_8}, we can construct the following concentration inequality on the empirical mean-variance $\widehat{\mathrm{MV}}^{\rho}_{n}:=(1-\rho)\hat{\sigma}_{n}^{2}-\rho\hat{\mu}_{n}$, where the true mean-variance defined as $\mathrm{MV}^{\rho}:=(1-\rho)\sigma^{2}-\rho\mu$.

\begin{lemma}\label{Lem_11}

Then, for every $n>0$ and $\delta \in (0,1)$, we have that with probability at least $1-\delta$,
\begin{equation}
   \left|\widehat{\mathrm{MV}}^{\rho}_{n}-\mathrm{MV}^{\rho}\right| \leq \varphi \left(\frac{2\log (2/ \delta)}{n}\right),
\end{equation}
where, 
$$\varphi(x):=32\theta^2 \max\left(\sqrt{x/2},x\right)+\theta^2 x+\rho \theta \sqrt{x}/(1-\rho).$$

 \end{lemma}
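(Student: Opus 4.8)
The plan is to combine Lemmas~\ref{Lem_7} and \ref{Lem_8} by a union bound and then collect terms into the stated function $\varphi$. First I would fix $n>0$ and $\delta\in(0,1)$, and apply each of the two earlier lemmas at confidence level $1-\delta/2$. Writing $L:=\log(2/(\delta/2))=\log(4/\delta)$, Lemma~\ref{Lem_7} gives $|\hat\mu_n-\mu|\le\theta\sqrt{2L/n}$ on an event of probability at least $1-\delta/2$, and Lemma~\ref{Lem_8} gives $|\hat\sigma_n^2-\sigma^2+(\hat\mu_n-\mu)^2|\le 32\theta^2\max(\sqrt{L/n},2L/n)$ on another event of probability at least $1-\delta/2$. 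By the union bound both inequalities hold simultaneously with probability at least $1-\delta$. (Note: if one wants the clean argument $\log(2/\delta)$ of $\varphi$ rather than $\log(4/\delta)$, one instead applies each lemma directly at level $1-\delta$ and accepts a factor-$2$ loss somewhere, or simply absorbs the constant; I would follow whichever bookkeeping the paper's statement of $\varphi$ demands, checking that the constants $32$ and the coefficient $\rho\theta/(1-\rho)$ come out exactly.)

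Next I would do the triangle-inequality decomposition. Since $\widehat{\mathrm{MV}}^\rho_n-\mathrm{MV}^\rho=(1-\rho)(\hat\sigma_n^2-\sigma^2)-\rho(\hat\mu_n-\mu)$, I rewrite $\hat\sigma_n^2-\sigma^2=\big(\hat\sigma_n^2-\sigma^2+(\hat\mu_n-\mu)^2\big)-(\hat\mu_n-\mu)^2$, so that
\begin{equation*}
\big|\widehat{\mathrm{MV}}^\rho_n-\mathrm{MV}^\rho\big|\le (1-\rho)\big|\hat\sigma_n^2-\sigma^2+(\hat\mu_n-\mu)^2\big|+(1-\rho)(\hat\mu_n-\mu)^2+\rho|\hat\mu_n-\mu|.
\end{equation*}
On the good event the first term is at most $(1-\rho)\cdot 32\theta^2\max(\sqrt{L/n},2L/n)$, the second is at most $(1-\rho)\theta^2\cdot 2L/n$, and the third is at most $\rho\theta\sqrt{2L/n}$. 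Setting $x:=2L/n=2\log(2/\delta)/n$ (with the appropriate convention on the log, as flagged above) and dividing through by $(1-\rho)$ is what produces $\varphi(x)=32\theta^2\max(\sqrt{x/2},x)+\theta^2 x+\rho\theta\sqrt{x}/(1-\rho)$, matching the stated form (the $\max(\sqrt{L/n},2L/n)$ becomes $\max(\sqrt{x/2},x)$). One then multiplies back by $(1-\rho)$ only if the target is the un-normalized functional; here the statement already has the $1/(1-\rho)$ inside $\varphi$, so no rescaling is needed.

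The main obstacle is purely the constant-chasing: making sure that the arguments of the logarithms and the numerical coefficients line up exactly with the $\varphi$ written in the lemma, given that a union bound naively costs a factor of $2$ inside each confidence level. I would resolve this by being careful about whether Lemmas~\ref{Lem_7}--\ref{Lem_8} are invoked at level $\delta$ or $\delta/2$, and by noting that $\max(\sqrt{(L)/n},2L/n)$ rewritten in terms of $x=2L/n$ is exactly $\max(\sqrt{x/2},x)$, which is the only slightly non-obvious algebraic identification. Everything else — the triangle inequality, dropping the nonnegative $(\hat\mu_n-\mu)^2$ bound into the $\theta^2 x$ term, factoring out $(1-\rho)$ — is routine. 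A minor secondary point is that the lemma as displayed omits its own hypotheses line (``Then, for every $n>0$\dots''), so I would silently assume the standing assumption of the subsection that $\{X_t\}$ are i.i.d.\ sub-Gaussian with parameter $\theta$, mean $\mu$, variance $\sigma^2$, which is exactly what Lemmas~\ref{Lem_7} and \ref{Lem_8} require.
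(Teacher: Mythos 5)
Your proposal is correct and follows essentially the same route as the paper: intersect the good events from Lemmas~\ref{Lem_7} and \ref{Lem_8}, apply the triangle inequality with the decomposition $\hat\sigma_n^2-\sigma^2=(\hat\sigma_n^2-\sigma^2+(\hat\mu_n-\mu)^2)-(\hat\mu_n-\mu)^2$, and collect terms into $\varphi$. In fact you are slightly more careful than the paper on the one delicate point: the paper invokes both lemmas at level $\delta$ and then writes $\mathbb{P}[A_n^c]+\mathbb{P}[B_n^c]\le\delta$ (which really only gives $2\delta$), whereas your suggestion to run each at level $\delta/2$ (at the cost of $\log(4/\delta)$ in the argument) is the clean fix.
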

\begin{proof}
\black{Let
$$A_{n}:=\left\{|\hat{\mu}_{n}-\mu| \leq \theta \sqrt{\frac{2 \log (2/ \delta)}{n}}\right\},$$
and
$$B_{n}:=\left\{\left| \hat{\sigma}_{n}^2-\sigma^2+(\hat{\mu}_{n}-\mu)^2\right| \leq 32\theta^2 \max \left(\sqrt{\frac{ \log (2/ \delta)}{n}},\frac{2 \log (2/ \delta)}{n}\right)\right\}.$$
By Lemma \ref{Lem_7} and \ref{Lem_8}, we have $\mathbb{P}(A)\geq 1-\delta$ and $\mathbb{P}(B)\geq 1-\delta$. Now, we define $$C_n:=\left\{\left|\widehat{\mathrm{MV}}^{\rho}_{n}-\mathrm{MV}^{\rho}\right| \leq \varphi \left(\frac{2\log (2/ \delta)}{n}\right)\right\}.$$ 
According to the triangle inequality, we have:
$$
\begin{aligned}
\left|\widehat{\mathrm{MV}}^{\rho}_{n}-\mathrm{MV}^{\rho}\right|&\leq (1-\rho)\left| \hat{\sigma}_{n}^2-\sigma^2\right|+\rho|\hat{\mu}_{n}-\mu| \\
&\leq(1-\rho)\left| \hat{\sigma}_{n}^2-\sigma^2+(\hat{\mu}_{n}-\mu)^2\right|+(1-\rho)(\hat{\mu}_{n}-\mu)^2+\rho|\hat{\mu}_{n}-\mu|.
\end{aligned}
$$
As a result, $A_{n}\cap B_{n} \subseteq C_{n}$. Therefore, we have:
$$\mathbb{P}\left[C_{n}^{c}\right]\leq \mathbb{P}\left[C_{n}^{c}\right]\leq \mathbb{P}\left[A_{n}^{c} \cup B_{n}^{c}\right]
\leq \mathbb{P}\left[A_{n}^{c}\right]+\mathbb{P}\left[B_{n}^{c}\right]\leq \delta.$$}
\end{proof}.
\subsection{Regret Analysis}\label{appd}
\subsubsection{Expected Regret}

Given the definition of regret in Equation ($\ref{Eq_5}$), \citet{sani2012risk} derived the following upper bound for the regret.  For $i^{\rho}_0 \in \argmin_{i\in \mathcal{A}} \textup{MV}^{\rho}_i$, $\widehat{\Delta}_{i}:=(1-\rho)\left(\hat{\sigma}_{i, n}^{2}-\sigma_{i^{\rho}_0}^{2}\right)-\rho\left(\hat{\mu}_{i, n}-\mu_{i^{\rho}_0}\right)$, and \black{$\widehat{\Gamma}_{i, h}:=\left|\hat{\mu}_{i, n}-\hat{\mu}_{h, n}\right|$}. \black{With probability 1}, the regret for a learning algorithm $\pi_t$ over $n$ rounds can be bounded as follows,
\begin{equation}
    \mathcal{R}_n(\pi) \leq \frac{1}{n} \sum_{i=1}^{K} T_{i, n} \widehat{\Delta}_{i}+\frac{1}{n^{2}} \sum_{i=1}^{K} \sum_{h \neq i} T_{i, n} T_{h, n} \widehat{\Gamma}_{i, h}^{2}.\label{Eq_7}
\end{equation}
\black{This result applies to any strategies addressing the mean-variance Multi-Armed Bandit (MAB) problem, including the RALCB algorithm.}
\black{The right-hand side of (\ref{Eq_7}) consists of two main components. The first term reflects regret as the difference in mean-variance between the best arm and the suboptimal arms. The second term represents exploration risk, being a weighted variance of the means. Excessive exploration increases regret, while consistently pulling a single arm eliminates exploration risk, though it may be suboptimal in terms of overall regret.}

\black{The upper bound \eqref{Eq_7} suggests that a bound on the pulls is sufficient to bound the regret. Subsequently, by computing expectations on both sides, as outlined in \cite{zhu2020thompson}, we obtain:
\begin{equation}
    \mathbb{E}[\mathcal{R}_n(\pi)]\leq\frac{1}{n}\sum_{i \in \mathcal{A}\setminus \mathcal{A}^{0}} \mathbb{E}[T_{i, n}](\Delta_{i}+2\Gamma_{i, \max }^{2})+\frac{5}{n}\sum_{i=1}^{K} \sigma_i^2,\label{Eq_10}
\end{equation}
where $\Delta_{i}:=(1-\rho)\left(\sigma_{i}^{2}-\sigma_{i^{\rho}_0}^{2}\right)-\rho\left(\mu_{i}-\mu_{i^{\rho}_0}\right)$, \black{$ \Gamma_{i, \max }:=\max \left\{\left|\mu_{i}-\mu_{h}\right|: h=1, \ldots, K\right\}$} and $\mathcal{A}^{0}$ is the set of optimal arms given by $\argmin_{i\in \mathcal{A}} \textup{MV}^{\rho}_i$.
This result also holds true for any strategies dealing with the mean-variance Multi-Armed Bandit (MAB) problem, including the RALCB algorithm. Equation \eqref{Eq_10} shows that to recover a bound on the expected regret, it suffices to bound the number of pulls of each suboptimal arm. Following an idea of \citet{sani2012risk}, we now derive an upper bound for the number of pulls in expectation.}

\begin{lemma}\label{Lem_4}
\black{Suppose that $i^{\rho}_0 \in \argmin_{i\in \mathcal{A}} \textup{MV}^{\rho}_i$. The expected number of pulls of any suboptimal arm $i \neq i^{\rho}_0$ in RALCB can be upper bounded by
\begin{equation}
    \mathbbm{E}\left[T_{i,n}\right] \leq \frac{8\log(n)}{\varphi^{-1}(\Delta_i/2)}+5 \label{Eq_11},
\end{equation}
where $n>0$ and $\varphi^{-1}$ is the inverse function of $\varphi$ and is given by
\begin{equation*}
\varphi^{-1}(x) = \left\{
        \begin{array}{ll}
             \left(\frac{-(\rho\theta+16\sqrt{2}\theta^2)+\sqrt{(\rho\theta+16\sqrt{2}\theta^2)^2+4\theta^2 x}}{2\theta^2}\right)^2 & \quad x \in [0,\frac{17}{2}\theta^2+\frac{\sqrt{2}}{2}\rho\theta), \\
            \left(\frac{-\rho\theta+\sqrt{\rho^2\theta^2+132\theta^2x}}{66\theta^2}\right)^2 & \quad x \in [\frac{17}{2}\theta^2+\frac{\sqrt{2}}{2}\rho\theta,\infty).
        \end{array}
    \right.
\end{equation*}}
\end{lemma}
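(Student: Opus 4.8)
\textbf{Proof proposal for Lemma \ref{Lem_4}.}

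The plan is to follow the standard UCB-style argument for bounding the expected number of pulls of a suboptimal arm, adapted to the lower-confidence-bound (LCB) formulation and the mean-variance index. First I would fix a suboptimal arm $i\neq i^\rho_0$ and note that arm $i$ is pulled at round $t$ only if $V^{\textup{RALCB}}_{i,T_{i,t-1}}\le V^{\textup{RALCB}}_{i^\rho_0,T_{i^\rho_0,t-1}}$, i.e.
$$\widehat{\textup{MV}}^\rho_{i,T_{i,t-1}}-\varphi\!\left(\tfrac{8\log t}{T_{i,t-1}}\right)\le \widehat{\textup{MV}}^\rho_{i^\rho_0,T_{i^\rho_0,t-1}}-\varphi\!\left(\tfrac{8\log t}{T_{i^\rho_0,t-1}}\right).$$
The idea is then to introduce the ``good event'' on which the concentration bound of Lemma \ref{Lem_11} holds simultaneously for both arms, with $\delta$ chosen so that $2\log(2/\delta)/T = 8\log t/T$ roughly, i.e.\ $\delta\approx 2t^{-4}$ (this is the source of the constant $8$ inside $\varphi$ and of the residual additive constant $5$ after summing the failure probabilities $\sum_t t^{-4}<\infty$, with the extra factor accounting for the two arms). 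On the good event, $\widehat{\textup{MV}}^\rho_{i^\rho_0}$ is within $\varphi(8\log t/T_{i^\rho_0,t-1})$ of $\textup{MV}^\rho_{i^\rho_0}$, so the right-hand side is $\le \textup{MV}^\rho_{i^\rho_0}$; and $\widehat{\textup{MV}}^\rho_i \ge \textup{MV}^\rho_i - \varphi(8\log t/T_{i,t-1})$. Combining, a pull of $i$ on the good event forces
$$\textup{MV}^\rho_i - 2\varphi\!\left(\tfrac{8\log t}{T_{i,t-1}}\right)\le \textup{MV}^\rho_{i^\rho_0},$$
that is, $\varphi(8\log t/T_{i,t-1})\ge \Delta_i/2$ where $\Delta_i:=\textup{MV}^\rho_i-\textup{MV}^\rho_{i^\rho_0}$ (note the paper's $\Delta_i$ in this lemma absorbs the $(1-\rho)$ factor into $\varphi$, matching the $\varphi$ written without the $(1-\rho)$ prefactor here). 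Since $\varphi$ is strictly increasing, this is equivalent to $T_{i,t-1}\le 8\log t/\varphi^{-1}(\Delta_i/2)$, and because $t\le n$, to $T_{i,t-1}\le 8\log n/\varphi^{-1}(\Delta_i/2)$.

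From here the counting argument is routine: the number of rounds $t\le n$ at which arm $i$ is pulled \emph{and} the good event holds is at most $8\log n/\varphi^{-1}(\Delta_i/2)$ (once $T_{i,t-1}$ exceeds this threshold, no further pull is possible on the good event), while the expected number of rounds on which the good event fails is bounded by $\sum_{t=1}^\infty 2\cdot 2 t^{-4}$ or a similar convergent tail, which I would arrange to be at most the additive constant $5$ (I would need to be slightly careful: the initialization loop contributes one pull per arm, and the two-arm union bound doubles the failure probability, so the constant $5$ should come out with a little room to spare). Taking expectations gives $\mathbb{E}[T_{i,n}]\le 8\log n/\varphi^{-1}(\Delta_i/2)+5$.

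Separately, I would verify by direct computation that the stated piecewise formula is indeed $\varphi^{-1}$: on each branch $\varphi$ is either $32\theta^2\sqrt{x/2}+\theta^2 x+\rho\theta\sqrt{x}$ (for $x\in[0,1/2)$, where $\max(\sqrt{x/2},x)=\sqrt{x/2}$) or $32\theta^2 x+\rho\theta\sqrt{x}$ (for $x\ge 1/2$); substituting $u=\sqrt{x}$ turns each into a quadratic in $u$, and solving with the positive root yields exactly the two displayed expressions, with the breakpoint $\frac{17}{2}\theta^2+\frac{\sqrt2}{2}\rho\theta$ being the value $\varphi(1/2)$. The main obstacle I anticipate is bookkeeping rather than conceptual: pinning down the precise relationship between the confidence parameter $\delta$ and the quantity $8\log t/T$ so that the failure-probability sum is genuinely summable and the leftover constant is no larger than $5$, and correctly tracking the $(1-\rho)$ factor so that the $\Delta_i$ of this lemma (which includes $(1-\rho)$) is compatible with the $\varphi$ used here (which is written without the $(1-\rho)$ prefactor, unlike the $\varphi$ in \eqref{Eq_6}). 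Handling the edge case where $\Delta_i/2$ is large enough to land in the second branch of $\varphi^{-1}$, and the case of multiple optimal arms (where one simply picks any fixed $i^\rho_0\in\mathcal{A}^0$), should be straightforward once the main estimate is in place.
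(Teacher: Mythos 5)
Your proposal matches the paper's proof in all essentials: the same LCB comparison event, the same choice $\delta=2/t^{4}$ feeding Lemma \ref{Lem_11}, the same gap condition $\varphi(8\log t/T_{i,t-1})\ge\Delta_i/2$ via the threshold $\ell=\lceil 8\log(t)/\varphi^{-1}(\Delta_i/2)\rceil$, and the same summable-tail accounting for the additive constant $5$; you also correctly flag the $(1-\rho)$ bookkeeping discrepancy between the two versions of $\varphi$. The only detail to make explicit when writing it up is that the "good event holds simultaneously" step requires a union bound over all possible values of the random sample counts $T_{i,t}$ and $T_{i^{\rho}_0,t}$ (the paper's double sum over $m$ and $h$, contributing the $t^2$ factor against $t^{-4}$), which is exactly the bookkeeping you anticipate.
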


\begin{proof}
  Let $i$ be a suboptimal arm and  $\ell$ be an arbitrary positive integer. 
$$
\begin{aligned}
T_{i,n}&=  1+\sum_{t=K+1}^n\mathbbm{1}_{\left\{\pi_t=i\right\}}\\
&\leq \ell+\sum_{t=\ell+1}^n \mathbbm{1}_{\left\{\pi_t=i, T_{i,t} \geq \ell\right\}} \\
& = \ell+\sum_{t=\ell+1}^n \mathbbm{1}_{\left\{V^{\textup{RALCB}}_{i,t} \leq V^{\textup{RALCB}}_{i^{\rho}_0,t}, T_{i,t} \geq \ell\right\}}.
\end{aligned}
$$
In the above, with setting $\delta:=2/t^4$:

$$
\begin{aligned}
\mathbbm{1}_{\left\{V^{\textup{RALCB}}_{i,t} \leq V^{\textup{RALCB}}_{i^{\rho}_0,t}, T_{i,t} \geq \ell\right\}}&= \mathbbm{1}_{\left\{\widehat{\mathrm{MV}}^{\rho}_{i,T_{i,t}}-\varphi \left(\frac{8\log(t)}{T_{i,t}}\right)\leq \widehat{\mathrm{MV}}^{\rho}_{i^{\rho}_0, T_{i^{\rho}_0,t}}-\varphi \left(\frac{8\log(t)}{T_{i^{\rho}_0,t}}\right), T_{i,t} \geq \ell\right\}}\\
&\leq \mathbbm{1}_{\left\{\min_{\ell \leq m\leq t}\widehat{\mathrm{MV}}^{\rho}_{i,m}-\varphi \left(\frac{8\log(t)}{m}\right)\leq \max_{0<h\leq t} \widehat{\mathrm{MV}}^{\rho}_{i^{\rho}_0, h}-\varphi \left(\frac{8\log(t)}{h}\right)\right\}}\\
&\leq \sum_{h=1}^{t} \sum_{m=\ell}^{t} \mathbbm{1}_{\left\{\widehat{\mathrm{MV}}^{\rho}_{i,m}-\varphi \left(\frac{8\log(t)}{m}\right)\leq \widehat{\mathrm{MV}}^{\rho}_{i^{\rho}_0, h}-\varphi \left(\frac{8\log(t)}{h}\right)\right\}}
\end{aligned}
$$

Consider that $\widehat{\mathrm{MV}}^{\rho}_{i,m}-\varphi \left(\frac{8\log(t)}{m}\right)- \widehat{\mathrm{MV}}^{\rho}_{i^{\rho}_0, h}+\varphi \left(\frac{8\log(t)}{h}\right)$ can be expressed as the sum of three terms:
$$
\begin{aligned}
&\widehat{\mathrm{MV}}^{\rho}_{i,m}-\varphi \left(\frac{8\log(t)}{m}\right)-\widehat{\mathrm{MV}}^{\rho}_{i^{\rho}_0, h}+\varphi \left(\frac{8\log(t)}{h}\right)\\
= & \left(\widehat{\mathrm{MV}}^{\rho}_{i,m}+\varphi \left(\frac{8\log(t)}{m}\right)-\mathrm{MV}^{\rho}_{i}\right)-\left(\widehat{\mathrm{MV}}^{\rho}_{i^{\rho}_0,h}-\varphi \left(\frac{8\log(t)}{h}\right)-\mathrm{MV}^{\rho}_{i^{\rho}_0}\right) \\
& +\left(\mathrm{MV}^{\rho}_{i}-\mathrm{MV}^{\rho}_{i^{\rho}_0}-2 \varphi \left(\frac{8\log(t)}{m}\right)\right) .
\end{aligned}
$$

For $m\ge\ell$ and $\ell=\left\lceil\frac{8\log(t)}{\varphi^{-1}(\Delta_i/2)}\right\rceil$, the last term above is positive. Therefore,
$$
\begin{aligned}
T_{i,n} &\leq  \ell+\sum_{t=\ell+1}^n \sum_{h=1}^{t} \sum_{m=\ell}^{t}\mathbbm{1}_{\left\{\widehat{\mathrm{MV}}^{\rho}_{i,m}+\varphi \left(\frac{8\log(t)}{m}\right)-\mathrm{MV}^{\rho}_{i}\leq 0\right\}}+\sum_{t=\ell+1}^n \sum_{h=1}^{t} \sum_{m=\ell}^{t} \mathbbm{1}_{\left\{\widehat{\mathrm{MV}}^{\rho}_{i^{\rho}_0,h}-\varphi \left(\frac{8\log(t)}{h}\right)-\mathrm{MV}^{\rho}_{i^{\rho}_0}\geq 0\right\}}
\end{aligned}
$$
Set $\delta:=2/t^4$, provided that $t\ge2$. Note that or each arm $i$, $X^{i}_{1},...,X^{i}_{m}$ are i.i.d.~sub-Gaussian random variables with mean $\mu_{i}$ and variance $\sigma^2_{i}$. Hence we may apply Lemma \ref{Lem_11}, which yields that
$$
\mathbbm{P}\left[\widehat{\mathrm{MV}}^{\rho}_{i,m}+\varphi \left(\frac{8 \log (t)}{m}\right)-\mathrm{MV}^{\rho}_{i}\leq 0\right] \leq 2/t^4.
$$
Similarly, we have
$$
\mathbbm{P}\left[\widehat{\mathrm{MV}}^{\rho}_{i^{\rho}_0,h}-\varphi \left(\frac{8 \log (t)}{h}\right)-\mathrm{MV}^{\rho}_{i^{\rho}_0}\geq 0\right] \leq 2/t^4.
$$
Hence,
$$
\begin{aligned}
\mathbbm{E}\left[T_{i,n}\right] \leq & \ell+ 2\sum_{t=\ell+1}^n \sum_{h=1}^{t} \sum_{m=\ell}^{t} 2/t^{4} \\
\leq & \frac{8\log(n)}{\varphi^{-1}(\Delta_i/2)}+1+4 \int_{\ell}^{\infty} t^{-2} d t \\
= & \frac{8\log(n)}{\varphi^{-1}(\Delta_i/2)}+1+4 \ell^{-1} \\
\leq & \frac{8\log(n)}{\varphi^{-1}(\Delta_i/2)}+5 .
\end{aligned}
$$
The last inequality holds because each arm has at least been pulled once.
\end{proof}

\subsubsection{High Probability Regret}
\black{Consider $X^{i}_{1},...,X^{i}_{t}$ are i.i.d.~sub-Gaussian random variables with mean $\mu_{i}$ and variance $\sigma_{i}^{2}$, as well as empirical mean $\hat{\mu}_{i, t}$ and variance $\hat{\sigma}_{i, t}^{2}$. Take $\theta$ such that $\mathbb{E}_{\mathbb{P}}[e^{a(X-\mathbb{E}_{\mathbb{P}}[X])}]\leq e^{\frac{a^2 \theta^2 }{2}}$ holds for all $a \in \mathbbm{R}$ and $i \in \mathcal{A}$. The empirical and true mean-variance are defined as $\widehat{\mathrm{MV}}^{\rho}_{i, s}:=(1-\rho)\hat{\sigma}_{i, s}^{2}-\rho\hat{\mu}_{i, s}$ and $\mathrm{MV}^{\rho}_{i}:=(1-\rho)\sigma_{i}^{2}-\rho\mu_{i}$ respectively.}
\begin{lemma}\label{Lem_11_2}
\black{For $t>0$ and $\delta \in(0,1 / 2 t K)$, we have:
$$
\mathbb{P}\left[\max_{\substack{i=1, \ldots, K \\ s=1, \ldots, t}}\left|\widehat{\mathrm{MV}}_{i, s}^\rho-\mathrm{MV}_i^\rho\right| \geq \varphi\left(\frac{2 \log (2 / \delta)}{s}\right)\right] \leq 2 t K \delta.
$$}
\end{lemma}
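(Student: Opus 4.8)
The plan is to deduce the uniform estimate from the single-sample bound of Lemma \ref{Lem_11} (equivalently, from Lemmas \ref{Lem_7} and \ref{Lem_8}) by a plain union bound, exploiting that the maximum inside the probability ranges only over the finite index set $\{1,\dots,K\}\times\{1,\dots,t\}$. The first observation is that the event we must control is exactly the finite union
$$
\bigcup_{i=1}^{K}\bigcup_{s=1}^{t}\Bigl\{\bigl|\widehat{\mathrm{MV}}^{\rho}_{i,s}-\mathrm{MV}^{\rho}_i\bigr|\ge\varphi\bigl(\tfrac{2\log(2/\delta)}{s}\bigr)\Bigr\},
$$
since the $\varphi$-threshold in the statement is evaluated at the very same $s$ over which the maximum is taken. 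Hence, by subadditivity of $\mathbb{P}$, it suffices to bound each of the $tK$ summands $\mathbb{P}\bigl[\,|\widehat{\mathrm{MV}}^{\rho}_{i,s}-\mathrm{MV}^{\rho}_i|\ge\varphi(2\log(2/\delta)/s)\,\bigr]$ by $2\delta$.

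For a fixed pair $(i,s)$ I would then apply the one-sample analysis to the i.i.d.\ sub-Gaussian sample $X^{i}_1,\dots,X^{i}_s$, which has mean $\mu_i$, variance $\sigma_i^2$ and sub-Gaussianity parameter $\theta$. Running the proof of Lemma \ref{Lem_11} verbatim with $n$ replaced by $s$, the event $\{|\widehat{\mathrm{MV}}^{\rho}_{i,s}-\mathrm{MV}^{\rho}_i|\ge\varphi(2\log(2/\delta)/s)\}$ is contained in $A_{i,s}^{c}\cup B_{i,s}^{c}$, where $A_{i,s}$ and $B_{i,s}$ are the good events of Lemmas \ref{Lem_7} and \ref{Lem_8} (same $\delta$, $n=s$), which satisfy $\mathbb{P}[A_{i,s}^{c}]\le\delta$ and $\mathbb{P}[B_{i,s}^{c}]\le\delta$; so the event has probability at most $2\delta$. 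Summing over the $tK$ pairs yields the claimed bound $2tK\delta$. The hypothesis $\delta\in(0,1/(2tK))$ is precisely what makes this bound non-vacuous ($2tK\delta<1$) and, since $1/(2tK)\le 1$, also guarantees $\delta\in(0,1)$ so that Lemmas \ref{Lem_7}--\ref{Lem_11} apply.

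There is no genuine obstacle here; the only points requiring a little care are (i) recognizing that the threshold inside the maximum depends on the running index $s$, so that the event is the finite union displayed above and a simple union bound — rather than a maximal or peeling inequality — suffices; and (ii) tracking the constant $2$, which originates in the two-sided split into a mean part and a variance part in the proof of Lemma \ref{Lem_11} and then propagates through the union bound to give $2tK\delta$ instead of $tK\delta$. A further harmless point is the passage from the strict inequalities ``$>$'' in Lemmas \ref{Lem_7} and \ref{Lem_8} to ``$\ge$'' in the present statement, which at worst costs a routine limiting argument in $\delta$ using the continuity of $\varphi$ and of $\delta\mapsto\log(2/\delta)$, and changes nothing in the bound.
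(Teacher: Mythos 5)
Your proposal is correct and follows essentially the same route as the paper: a union bound over the finite index set $\{1,\dots,K\}\times\{1,\dots,t\}$, with each term controlled by the intersection of the mean-concentration and variance-concentration events of Lemmas \ref{Lem_7} and \ref{Lem_8} (combined via the triangle inequality as in Lemma \ref{Lem_11}), yielding $2\delta$ per pair and hence $2tK\delta$ in total. Your remarks on the role of the hypothesis $\delta<1/(2tK)$ and on the strict-versus-weak inequality are sensible side observations that do not change the argument.
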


\begin{proof}
   Let
$$
A_{i, s}:=\left\{\left|\hat{\mu}_{i, s}-\mu_i\right| \leq \theta \sqrt{\frac{2 \log (2 / \delta)}{s}}\right\}
$$
and
$$
B_{i, s}:=\left\{\left|\hat{\sigma}_{i, s}^2-\sigma_i^2+\left(\hat{\mu}_{i, s}-\mu_i\right)^2\right| \leq \black{32} \theta^2 \max \left(\sqrt{\frac{\log (2 / \delta)}{s}}, \frac{2 \log (2 / \delta)}{s}\right)\right\} .
$$
By Lemma \ref{Lem_7} and \ref{Lem_8}, we have $\mathbb{P}\left(A_{i, s}\right) \geq 1-\delta$ and $\mathbb{P}\left(B_{i, s}\right) \geq 1-\delta$. Now, we define
$$
C_{i, s}:=\left\{\left|\widehat{\mathrm{MV}}_{i, s}^\rho-\mathrm{MV}_i^\rho\right| \leq \varphi\left(\frac{2 \log (2 / \delta)}{s}\right)\right\} .
$$

According to the triangle inequality, we have for any $i$ and $t$ :
$$
\begin{aligned}
\left|\widehat{\mathrm{MV}}_{i, s}^\rho-\mathrm{MV}_i^\rho\right| & \leq(1-\rho)\left|\hat{\sigma}_{i, s}^2-\sigma_i^2\right|+\rho\left|\hat{\mu}_{i, s}-\mu_i\right| \\
& \leq(1-\rho)\left|\hat{\sigma}_{i, s}^2-\sigma_i^2+\left(\hat{\mu}_{i, s}-\mu_i\right)^2\right|+(1-\rho)\left(\hat{\mu}_{i, s}-\mu_i\right)^2+\rho\left|\hat{\mu}_{i, s}-\mu_i\right| .
\end{aligned}
$$

As a result, for any $i$ and $t, A_{i, s} \cap B_{i, s} \subseteq C_{i, s}$. Through the union bound, we have:
$$
\mathbb{P}\left[\bigcup_{i=1}^K \bigcup_{s=1}^t C_{i, s}^c\right] \leq \sum_{i, s} \mathbb{P}\left[C_{i, s}^c\right] \leq \sum_{i, s} \mathbb{P}\left[A_{i, s}^c \cup B_{i, s}^c\right] \leq \sum_{i, s}\left(\mathbb{P}\left[A_{i, s}^c\right]+\mathbb{P}\left[B_{i, s}^c\right]\right) \leq 2 t K \delta .
$$ 
\end{proof}

Now, we can turn to the proof of Theorem \ref{Thm_2}.

\begin{proof}[Proof of Theorem \ref{Thm_2}]
First, we derive the claimed upper bound for the regret. By the definitions in Section \ref{appd}, we have:
$$\widehat{\Delta}_{i}=\Delta_i+(1-\rho)\left(\hat{\sigma}_{i, T_{i,t}}^{2}-\sigma_{i}^{2}\right)-\rho\left(\hat{\mu}_{i, T_{i,t}}-\mu_{i}\right),$$
and
$$\black{\left|\widehat{\Gamma}_{i, h}\right|}=\left|\Gamma_{i, h}-\mu_i+\mu_h+\hat{\mu}_{i, T_{i,t}}-\hat{\mu}_{h, T_{h,t}}\right|.$$

\black{According to Lemma \ref{Lem_11_2}, by setting $\delta=2/t^4$ and given $s\leq t$, we can construct a high-probability event:}\par
 \scalebox{0.8}{$
\mathcal{E}_{t}:=\left\{ \forall i=1,...,K,\; \forall s=1,...,t,\; |\hat{\mu}_{i,s}-\mu_i| \leq \theta \sqrt{\frac{8 \log (t)}{s}},\; \left| \hat{\sigma}_{i,s}^2-\sigma_{i}^2+(\hat{\mu}_{i,s}-\mu_i)^2\right| \leq 32\theta^2 \max \left(\sqrt{\frac{ 4\log (t)}{s}},\frac{8 \log (t)}{s}\right)\right\}.
$}

Through the union bound over arms and periods, we have $\mathbb{P}\left[\mathcal{E}_{t}^{c}\right]\leq K/t^3$ where \black{$t \in (K^{1/3},\infty)$}.
\black{With probability at least $1-K/t^3$}, by Lemma \ref{Lem_7}, we have $\left|\widehat{\Gamma}_{i, h}\right|\leq \left|\Gamma_{i, h}\right|+\theta \sqrt{8\log (t)/\black{T_{i,t}}}+\theta \sqrt{8\log (t)/T_{h,t}}$. By Lemma \ref{Lem_11_2}, \black{with probability at least $1-K/t^3$}, we also have $\widehat{\Delta}_{i}\leq \Delta_i + \varphi\left(8\log (t)/T_{i,t}\right)$. According to \eqref{Eq_7},
\begin{equation*}
\begin{aligned} 
 \mathcal{R}_t(\pi) &\leq \frac{1}{t} \sum_{i=1}^{K} T_{i, t} \widehat{\Delta}_{i}+\frac{1}{t^{2}} \sum_{i=1}^{K} \sum_{h \neq i} T_{i, t} T_{h, t} \widehat{\Gamma}_{i, h}^{2} \\
 &\leq \frac{1}{t} \sum_{i=1}^{K} T_{i, t} \left(\Delta_{i}+ \varphi\left(\frac{8\log (t)}{T_{i,t}}\right)\right)\\
 &+\frac{1}{t^{2}} \sum_{i=1}^{K} \sum_{h \neq i} T_{i, t} T_{h, t} \left( \left|\Gamma_{i, h}\right|+\theta \sqrt{\frac{8\black{\log (t)}}{T_{i,t}}}+\theta \sqrt{\frac{8\log (t)}{T_{h,t}}}\right)^{2} \\
 &\leq \frac{1}{t}\sum_{i=1}^{K} T_{i, t} \Delta_{i} +\frac{1}{t}\sum_{i=1}^{K} T_{i, t} \varphi\left(\frac{8\log (t)}{T_{i,t}}\right)+\frac{2}{t^{2}} \sum_{i=1}^{K} \sum_{h \neq i} T_{i, t} T_{h, t} \Gamma_{i, h}^{2} \\
 &+\frac{8\theta^2\sqrt{2}}{t^{2}} \sum_{i=1}^{K} \sum_{h \neq i}T_{h,t}\log(t)+\frac{8\theta^2\sqrt{2}}{t^{2}} \sum_{i=1}^{K} \sum_{i \neq i}T_{h,t}\log(t) \\
 &\leq \frac{1}{t}\sum_{i=1}^{K} T_{i, t} \Delta_{i}+\frac{2}{t^{2}} \sum_{i=1}^{K} \sum_{h \neq i} T_{i, t} T_{h, t} \Gamma_{i, h}^{2}\\
 &+ 32(1-\rho)\theta\max\left(\sqrt{\frac{8K\log (t)}{t}},\frac{4K\log (t)}{t}\right)+4(1+4\sqrt{2})(1-\rho)\frac{K\theta\log (t)}{t}+\rho\theta\sqrt{\frac{8K\log (t)}{t}}\\
 &\leq \frac{1}{t}\sum_{i=1}^{K} T_{i, t} \Delta_{i}+\frac{2}{t^{2}} \sum_{i=1}^{K} \sum_{h \neq i} T_{i, t} T_{h, t} \Gamma_{i, h}^{2}+\frac{\varphi\left(\frac{8K\log (t)}{t}\right)}{\theta}+\frac{16\sqrt{2}(1-\rho)K\theta\log (t)}{t},
\end{aligned}
\end{equation*}

with probability $1-K/t^3$. In the next to last passage we used Jensen’s inequality for concave functions and \black{leveraged the facts that $K-1<K$ and $\sum_i T_{i,t}\leq t$}. 
Based on \eqref{Eq_10} and Lemma \ref{Lem_4}, we can obtain:
\begin{equation*}
\begin{aligned} 
\frac{1}{t}\sum_{i=1}^{K} T_{i, t} \Delta_{i}+\frac{2}{t^{2}} \sum_{i=1}^{K} \sum_{h \neq i} T_{i, t} T_{h, t} \Gamma_{i, h}^{2}&\leq\frac{1}{t}\sum_{i \in \mathcal{A}\setminus \mathcal{A}^{0}} T_{i, t}(\Delta_{i}+2\Gamma_{i, \max }^{2})\\
&\leq\frac{1}{t}\sum_{i \in \mathcal{A}\setminus \mathcal{A}^{0}} \left(\frac{8\log(t)}{\varphi^{-1}(\Delta_i/2)}+5\right)(\Delta_{i}+2\Gamma_{i, \max }^{2}),
\end{aligned}
\end{equation*}
\black{with probability at least $1-K/t^3$}. Combining the above results gives us the stated regret bound.%
\end{proof}

\bibliographystyle{apa} 
\bibliography{reference.bib}

\end{document}